\let\oldtocsection=\tocsection
\let\oldtocsubsection=\tocsubsection
\let\oldtocsubsubsection=\tocsubsubsection
\renewcommand{\tocsection}[2]{\vspace{0.5em}\hspace{0em}\oldtocsection{#1}{#2}}
\renewcommand{\tocsubsection}[2]{\vspace{0.5em}\hspace{1em}\oldtocsubsection{#1}{#2}}
\renewcommand{\tocsubsubsection}[2]{\vspace{0.5em}\hspace{2em}\oldtocsubsubsection{#1}{#2}}
\def\nline{\\ \noalign{\medskip}}
\newtheorem{theoreme}{Theorem}[section]
\theoremstyle{definition}
\numberwithin{equation}{section}
\renewenvironment{proof}{{\bfseries \noindent Proof.}}{\demo}
\newcommand\xqed[1]{%
	\leavevmode\unskip\penalty9999 \hbox{}\nobreak\hfill
	\quad\hbox{#1}}
\newcommand\demo{\xqed{$\square$}}
\def\R{\mathbb R}
\def\N{\mathbb N}
\def\C{\mathbb C}
\def\HH{\mathcal H}
\def\AA{\mathcal A}
\def\la {{\lambda}}
\newcommand {\nc}   {\newcommand}
\nc {\be}   {\begin{equation}} \nc {\ee}   {\end{equation}} \nc
\nc {\eeq}  {\end{eqnarray}} \nc {\beqs}
\nc {\eeqs} {\end{eqnarray*}}
\def\edc{\end{document}}
\providecommand{\abs}[1]{\lvert#1\rvert}
\numberwithin{equation}{section}
\theoremstyle{Thm}
\newtheorem{Thm}{Theorem}[section]
\newtheorem{lem}{Lemma}[section]
\newtheorem{prop}{Proposition}[section]
\newtheorem{rk}{Remark}[section]
\definecolor{carnelian}{rgb}{0.7, 0.11, 0.11}
\definecolor{carmine}{rgb}{0.59, 0.0, 0.09}
\definecolor{burgundy}{rgb}{0.5, 0.0, 0.13}
\definecolor{darkmidnightblue}{rgb}{0.0, 0.2, 0.4}
\definecolor{dimgray}{rgb}{0.75, 0.75, 0.75}
\definecolor{palecarmine}{rgb}{0.69, 0.25, 0.21}
\newcounter{dummy}
\numberwithin{dummy}{section}
\newtheorem{defi}[dummy]{Definition}
\numberwithin{equation}{section}
\def\AA{\mathcal A}
\def\HH{\mathbf{\mathcal H}}
\newcommand{\f}{\mathsf{f}}
\providecommand{\abs}[1]{\lvert#1\rvert}
\begin{document}

	\title[\fontsize{7}{9}\selectfont  ]{Stability and instability results of the Kirchhoff plate equation with delay terms on the boundary control}
\author{Mohammad Akil$^{1}$}
\author{Haidar Badawi$^{2,*}$}
\author{Mohamed Balegh$^{3}$}
\author{Zayd Hajjej$^{4}$}
\address{$^1$ Universit\'e Polytechnique Hauts-de-France, C\'ERAMATHS/DEMAV,
	Valenciennes, France}
\address{$^2$ Department of Mathematics, Lebanese International University, Beirut, Lebanon}
\address{$^3$  Department of Mathematics, College of Science and Arts, Muhayil, King Khalid University, Abha, 61413, Saudi Arabia.}
\address{$^4$ Department of Mathematics, College of Science, King Saud University, P.O. Box 2455, Riyadh 11451, Saudi Arabia.}
\address{$^*$ Corresponding author: haidar.badawi@liu.edu.lb}
\email{Mohammad.Akil@uphf.fr, haidar.badawi@liu.edu.lb, mbalegh@kku.edu.sa,   zhajjej@ksu.edu.sa}
\keywords{Kirchhoff plate equation, instability, time delay, strong stability, exponential stability}
\begin{abstract}
	In this paper, we consider  the Kirchhoff plate equation  with  delay terms on the boundary control are added (see system \eqref{p5-2.1} below). we give some instability examples of system \eqref{p5-2.1} for some choices of delays. Finally, we prove its well-posedness, strong stability without any geometric condition and exponential stability under a multiplier geometric control condition.
\end{abstract}
\maketitle
\pagenumbering{roman}
\maketitle
\tableofcontents
\pagenumbering{arabic}
\setcounter{page}{1}
\section{Introduction}
\noindent 
Let $\Omega \subset \R^2$ be a bounded open set with  boundary $\Gamma$ of class $C^4$ consisting of a clamped part $\Gamma_0 \neq \emptyset$ and a rimmed part $\Gamma_1 \neq \emptyset$ such that $\overline{\Gamma_0}\cap \overline{\Gamma_1}=\emptyset$. We consider  the following  Kirchhoff plate equation with delay terms on the boundary controls:
\begin{equation}\label{p5-2.1}
	\left\{	\begin{array}{lll}
		u_{tt}(x,t)+\Delta^2 u(x,t) =0 \ \ \text{in} \ \ \Omega\times (0,\infty),\vspace{0.15cm}\\
		u(x,t)=\partial_\nu u(x,t)=0 \ \ \text{on} \ \ \Gamma_0 \times (0,\infty),\vspace{0.15cm}\\
		\mathcal{B}_1 u(x,t) = -\beta_1 \partial_\nu u_t(x,t)-\beta_2 \partial_\nu u_t (x,t-\tau_1)\ \ \text{on } \ \ \Gamma_1\times (0,\infty),\vspace{0.15cm}\\
		\mathcal{B}_2 u (x,t) =\gamma_1 u_t(x,t)+ \gamma_2 u_t (x,t-\tau_2) \ \ \text{on} \ \ \Gamma_1 \times (0,\infty),\vspace{0.15cm}\\
		u(x,0)=u_0(x),  \ \ u_t(x,0)=u_1(x) \ \ \text{in} \ \ \Omega,\vspace{0.15cm}\\
	u_t (x,t)=f_0 (x,t) \ \ \text{on} \ \ \Gamma_1 \times (-\tau_1 ,0),\vspace{0.15cm}\\
		\partial_\nu u_t (x,t)=g_0 (x,t) \ \ \text{on} \ \ \Gamma_1 \times (-\tau_2,0).
	\end{array}\right.
\end{equation}
Here  and below, $\beta_1$, $\gamma_1$, $\tau_1$ and $\tau_2$ are positive real numbers,  $\beta_2$ and  $\gamma_2$ are non-zero real numbers, $\nu =(\nu_{1},\nu_{2})$ is the unit outward normal vector along  $\Gamma$, and $\tau =(-\nu_{2},\nu_{1})$ is the unit tangent vector along $\Gamma$. The constant $0<\mu<\frac{1}{2} $ is the Poisson coefficient and the boundary operators $\mathcal{B}_1$ and $\mathcal{B}_2$ are defined   by
$$
\mathcal{B}_1 \mathsf{f}=\Delta \mathsf{f}+(1-\mu) \mathcal{C}_1 \mathsf{f}
$$
and
$$
\mathcal{B}_2 \mathsf{f}=\partial_{\nu}\Delta \mathsf{f}+(1-\mu)\partial_{\tau}\mathcal{C}_2 \mathsf{f},
$$
where
$$
\mathcal{C}_1\mathsf{f}=2\nu_{1}\nu_{2}\mathsf{f}_{x_1x_2} -\nu_{1}^2\mathsf{f}_{x_2x_2}-\nu_{2}^2 \mathsf{f}_{x_1x_1} \ \ \text{and} \ \ \mathcal{C}_2 \mathsf{f}= (\nu_{1}^2-\nu_{2}^2)\mathsf{f}_{x_1x_2}-\nu_{1}\nu_{2}\left(\mathsf{f}_{x_1x_1}-\mathsf{f}_{x_2x_2} \right).
$$
Moreover,  easy computations show that
\begin{equation}\label{p5-JEL}
	\mathcal{C}_1\mathsf{f}=-\partial^2_{\tau}\mathsf{f}-\partial_{\tau} \nu_{2}\mathsf{f}_{x_1} +\partial_{\tau} \nu_{1}\mathsf{f}_{x_2} \ \ \text{and} \ \ \mathcal{C}_2 \mathsf{f}=\partial_{\nu \tau} \mathsf{f} -\partial_{\tau}\nu_{1}\mathsf{f}_{x_1}-\partial_{\tau}\nu_{2}\mathsf{f}_{x_2}.
\end{equation}\\
Now, we reformulate system \eqref{p5-2.1}. For this aim, as in \cite{Nicaise2006},
we introduce the following auxiliary  variables
\begin{equation}
	\begin{array}{lll}
		\displaystyle	z^1 (x,\rho,t):=\partial_\nu u_t (x,t-\rho\tau_1 ),\quad x\in \Gamma_1 ,\,\rho\in(0,1),\, t >0,\vspace{0.15cm}\\
		\displaystyle	z^2 (x,\rho,t):=u_t (x,t-\rho\tau_2 ),\quad x\in \Gamma_1 ,\,\rho\in(0,1),\, t >0.
	\end{array}
\end{equation}
Then, system \eqref{p5-2.1} becomes
\begin{eqnarray}
	\displaystyle  u_{tt}+\Delta^2 u&=&0 \ \  \text{in} \ \   \Omega \times (0,\infty),\label{p5-seq10}\vspace{0.15cm}\\
	\displaystyle u=\partial_\nu u&=&0 \ \ \text{on} \ \ \Gamma_0 \times(0,\infty),\label{p5-seq20}\vspace{0.15cm}\\
	\displaystyle \mathcal{B}_1 u +\beta_1 \partial_\nu u_t+\beta_2 z^1(\cdot,1,t)&=&0 \ \ \text{on} \ \ \Gamma_1 \times (0,\infty)\label{p5-seq30}\vspace{0.15cm},\\
	\displaystyle \mathcal{B}_2u-\gamma_1 u_t -\gamma_2 z^2(\cdot,1,t) &=&0\ \ \text{on} \ \ \Gamma_1 \times (0,\infty)\label{p5-seq40}\vspace{0.15cm},\\
	\displaystyle \tau_1 z^1_t (\cdot,\rho,t)+z^1_\rho (\cdot,\rho,t) &=&0  \ \ \text{on} \ \   \Gamma_1 \times (0,1) \times  (0,\infty),\label{p5-seqd10}\vspace{0.15cm}\\
	\displaystyle \tau_2 z^2_t (\cdot,\rho,t)+z^2_\rho (\cdot,\rho,t)& =&0   \ \ \text{on} \ \   \Gamma_1 \times (0,1) \times  (0,\infty),\label{p5-seqd20}
\end{eqnarray}
with the following initial conditions
\begin{equation}\label{p5-seq70}
	\left\{	\begin{array}{lll}
		u(\cdot,0)=u_0(\cdot), \ \ u_t(\cdot,0)=u_1(\cdot) \ \ \text{in} \ \ \Omega ,\vspace{0.15cm}\\
		z^1 (\cdot,\rho,0)=f_0 (\cdot,-\rho \tau_1) \ \ \text{on} \ \ \Gamma_1 \times (0,1),\vspace{0.15cm}\\
		z^2 (\cdot,\rho,0)=g_0 (\cdot,-\rho \tau_2) \ \ \text{on} \ \ \Gamma_1 \times (0,1).
	\end{array}\right.
\end{equation}
The energy of system \eqref{p5-seq10}-\eqref{p5-seq70} is given by
\begin{equation}\label{p5-E0}
	\begin{array}{lll}
		\displaystyle E(t)=\frac{1}{2} \left\{ a(u,u)+\int_{\Omega}|u_t|^2dx
		+ \tau_1 |\beta_2|\int_{\Gamma_1}\int_{0}^1 \left|z^1 (\cdot,\rho,t)\right|^2 d\rho d\Gamma +\tau_2 |\gamma_2|\int_{\Gamma_1}\int_{0}^1 \left|z^2 (\cdot,\rho,t)\right|^2 d\rho d\Gamma \right\},
	\end{array}
\end{equation}
where the sequilinear form $a:H^2(\Omega)\times H^2(\Omega) \longmapsto \C$ is defined by
\begin{equation}\label{p5-1.3}
	a(\f,\mathsf{g})=\int_{\Omega} \left[\f_{x_1x_1}\overline{\mathsf{g}}_{x_1x_1}+\f_{x_2x_2}\overline{\mathsf{g}}_{x_2x_2}+\mu\left(\f_{x_1x_1} \overline{\mathsf{g}}_{x_2x_2}+\f_{x_2x_2} \overline{\mathsf{g}}_{x_1x_1}\right) +2(1-\mu)\f_{x_1x_2}\overline{\mathsf{g}}_{x_1x_2}\right]dx.
\end{equation}
We first recall the following Green's formula (see \cite{Lagnese}):
\begin{equation}\label{p5-GF}
	a(\f,\mathsf{g})=\int_{\Omega} \Delta^2 \f \overline{\mathsf{g}} dx +\int_{\Gamma}\left( \mathcal{B}_1\f \partial_{\nu}\overline{\mathsf{g}}-\mathcal{B}_2 \f \overline{\mathsf{g}}   \right) d\Gamma, \ \ \forall \f\in H^4 (\Omega ),  \ \mathsf{g} \in H^2 (\Omega).
\end{equation}
For further purposes, we need a weaker version of it. Indeed   as $\mathcal{D}(\overline{\Omega})$ is dense in $E(\Delta^2,L^2(\Omega)):=\\\left\{\f\in H^2(\Omega)  \ |  \ \Delta^2 \f\in L^2(\Omega) \right\}$ equipped with its natural norm, we deduce that $\f\in E(\Delta^2,L^2(\Omega))$ (see Theorem 5.6 in \cite{nicaise1993polygonal}) satisfies $\mathcal{B}_1 \f \in H^{-\frac{1}{2}} (\Gamma)$ and $\mathcal{B}_2 \f \in H^{-\frac{3}{2}} (\Gamma)$ with
\begin{equation}\label{p5-GFw}
	a(\f,\mathsf{g})=\int_{\Omega} \Delta^2 \f \overline{\mathsf{g}} dx +\langle \mathcal{B}_1\f ,\partial_{\nu}\mathsf{g}\rangle_{H^{-\frac{1}{2}}(\Gamma), H^{\frac{1}{2}}(\Gamma)} -\langle \mathcal{B}_2 \f ,\mathsf{g} \rangle_{H^{-\frac{3}{2}}(\Gamma), H^{\frac{3}{2}}(\Gamma)}  , \ \ \forall \mathsf{g} \in H^2 (\Omega).
\end{equation}
{\rm	Similar to \cite{Badawidyn}, for any regular solution $U=(u,u_t,z^1,z^2 )$  of  system \eqref{p5-seq10}-\eqref{p5-seq70}, the energy $E(t)$  satisfies the following estimation }
\begin{equation}\label{p5-3.9}
	\frac{d}{dt} E (t)\leq - (\beta_1 -|\beta_2|)\int_{\Gamma_1} |\partial_\nu u_t|^2 d\Gamma -(\gamma_1-|\gamma_2|)\int_{\Gamma_1}|u_t|^2 d\Gamma.
\end{equation}
Let us recall some previous literature.\\\linebreak

 In 1993, Rao in \cite{rao93} studied the stabilization of the Kirchhoff plate equation with non-linear boundary controls (in the linear case, it corresponds to system \eqref{p5-2.1} with $\beta_2=\gamma_2=0$),   under  a multiplier geometric control condition he established an exponential energy decay rate.\\

 Time delays appear in several applications such as in  physics, chemistry, biology, thermal phenomena not only depending on the present state but also on some past occurrences (see \cite{PhysRevE.57.2150, Kol-mish}). In the last years, the control of partial differential equations with time delays have become popular among scientists, since in many cases time delays induce  some instabilities see \cite{datko1-inst,datko2-inst,datko1985,dreher2009}.\\

  In 2006, Nicaise and Pignotti in \cite{Nicaise2006} studied the multidimensional wave equation  with boundary feedback and a delay term at the boundary, by considering the following system:
 \begin{equation}\label{EQ-Nicaise-2006-1}
 	\left\{
 	\begin{array}{ll}
 		{u}_{tt}(x,t) -  \Delta u (x,t) = 0 \ \ \text{in} \ \   \Omega \times (0,\infty), \nline
 		{u}(x,t)=0\ \ \text{on} \ \   \Gamma_D \times (0,\infty), \nline
 		\frac{\partial u}{\partial \nu} (x,t) = -\mu_1 u_t(x,t)-\mu_2 u_t(x,t-\tau) \ \ \text{on} \ \   \Gamma_N \times (0,\infty), \nline
 		u(x,0)=u_0(x), \ \ u_t(x,0)=u_1(x) \ \ \text{in} \ \ \Omega,\nline
 		\displaystyle{u_t (x,t) = f_0(x,t) } \ \ \text{on} \ \  \Gamma_N \times (-\tau,0),
 	\end{array}
 	\right.
 \end{equation}
where $\mu_1$ and $\mu_2$ are positive real numbers, and  $\Omega$ is an open bounded domain of $\R^n$  with a boundary $\Gamma$ of class $C^2$ and $\Gamma = \Gamma _ D \cup \Gamma_N$, such that $\overline{\Gamma_ D} \cap \overline{\Gamma_N} = \emptyset$. Under the assumption $\mu_2<\mu_1$,  an exponential  decay is achieved. If this assumption does not hold, they found a sequences of delays $\{\tau_k\}_{k}$, $\tau_k\to 0$, for which the corresponding solutions have increasing energy. \\

To the best of our knowledge, it seems  that there  is no result in the existing literature concerning the case of the Kirchhoff plate equation with  boundary controls and time delay. The goal of the present paper is to fill this gap by studying both stability and instability of system  \eqref{p5-2.1}.\\

The outline of this paper is as follows. In section 2, if $|\beta_2|\geq \beta_1$ and $|\gamma_2|\geq \gamma_1$, we give some instability examples of system \eqref{p5-2.1} for some particular choices of delays. In  subsection 3.1 , we prove the well-posedness  of our system. The subsection 3.2 is devoted to establish the strong stability of our system  by following a general criteria of Arendt and Batty. Finally, in the subsection 3.3, under the   \textbf{(MGC)} condition, we show that  system \eqref{p5-2.1} is exponentially stable. \\

Let us finish this introduction with some notation used in the
remainder of the paper:   The usual
norm and semi-norm of the Sobolev space $H^{s}(\Omega)$ ($s> 0$) are denoted by
$\|\cdot\|_{H^s(\Omega)}$ and $|\cdot|_{H^s(\Omega)}$, respectively.
By $A\lesssim B$, we mean that there exists a constant $C>0$ independent of  $A$, $B$ and  a natural parameter $n$ such that  $A\leq C B$.

\section{Instability results}\label{p5-sec3.3} In this section, we will give some instability examples of system \eqref{p5-2.1} in the cases $|\beta_2|\geq \beta_1$ and $|\gamma_2|\geq \gamma_1$. This is achieved by distinguishing between the following cases:
\begin{equation}\tag{$\rm{IS}_1$}\label{p5-is1}
	|\beta_2|=\beta_1 \ \ \text{and} \ \ |\gamma_2|=\gamma_1,
\end{equation}
\begin{equation}\tag{$\rm{IS}_2$}\label{p5-is2}
	|\beta_2|\geq\beta_1 \ \ \text{and} \ \ |\gamma_2|\geq \gamma_1 \ \ \text{and} \ \ |\beta_2|-\beta_1 +|\gamma_2|-\gamma_1>0.
\end{equation}

\begin{Thm}
	{\rm
		If \eqref{p5-is1} or \eqref{p5-is2}  hold, then there exist  sequences of delays and solutions of \eqref{p5-2.1} corresponding to these delays such that their standard energy is constant.

	}
\end{Thm}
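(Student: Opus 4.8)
The plan is to exhibit, for suitably chosen delays, a purely imaginary eigenvalue $i\omega$ (with $\omega\in\R^\ast$) of the generator $\AA^0$; the associated solution $U(t)=e^{i\omega t}U_0$ then has $u(x,t)=e^{i\omega t}\phi(x)$, so that its standard energy $\tfrac12\bigl(a(u,u)+\int_\Omega|u_t|^2\,dx\bigr)=\tfrac12\bigl(a(\phi,\phi)+\omega^2\|\phi\|_{L^2(\Omega)}^2\bigr)$ is manifestly time independent. Writing $\AA^0 U=i\omega U$ with $U=(u,v,z^1,z^2)^\top$ and eliminating $v=i\omega u$, $z^1(\cdot,\rho)=i\omega\,\partial_\nu u\,e^{-i\omega\tau_1\rho}$, $z^2(\cdot,\rho)=i\omega\,u\,e^{-i\omega\tau_2\rho}$, I reduce the eigenvalue equation to the boundary value problem $\Delta^2\phi=\omega^2\phi$ in $\Omega$, $\phi=\partial_\nu\phi=0$ on $\Gamma_0$, together with
\[
\mathcal{B}_1\phi=-i\omega\,(\beta_1+\beta_2 e^{-i\omega\tau_1})\,\partial_\nu\phi,\qquad \mathcal{B}_2\phi=i\omega\,(\gamma_1+\gamma_2 e^{-i\omega\tau_2})\,\phi\quad\text{on }\Gamma_1 .
\]

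Second, I would observe that for real $\omega$ this problem is conservative (self-adjoint) precisely when the two boundary coefficients are purely imaginary, i.e. when $\Re(\beta_1+\beta_2 e^{-i\omega\tau_1})=0$ and $\Re(\gamma_1+\gamma_2 e^{-i\omega\tau_2})=0$, that is $\cos(\omega\tau_1)=-\beta_1/\beta_2$ and $\cos(\omega\tau_2)=-\gamma_1/\gamma_2$. These are solvable in $\omega\tau_1,\omega\tau_2$ exactly under \eqref{p5-is1} or \eqref{p5-is2}, since then $\beta_1/|\beta_2|\le1$ and $\gamma_1/|\gamma_2|\le1$. With such a choice the boundary conditions collapse to the \emph{real} Robin form $\mathcal{B}_1\phi=c_1\,\partial_\nu\phi$, $\mathcal{B}_2\phi=c_2\,\phi$ on $\Gamma_1$, where $c_1=s_1\omega\sqrt{\beta_2^2-\beta_1^2}$ and $c_2=s_2\omega\sqrt{\gamma_2^2-\gamma_1^2}$, the signs $s_1,s_2\in\{\pm1\}$ being free (they correspond to the two admissible values of $\omega\tau_i$ modulo $2\pi$). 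Thus $\omega^2$ must be an eigenvalue of the self-adjoint operator $T_{c_1,c_2}$ on $L^2(\Omega)$ generated by the symmetric form $a(\phi,\psi)-c_1\int_{\Gamma_1}\partial_\nu\phi\,\partial_\nu\bar\psi\,d\Gamma+c_2\int_{\Gamma_1}\phi\bar\psi\,d\Gamma$ on $H^2_{\Gamma_0}(\Omega)$, which has discrete spectrum $\Lambda_1(c_1,c_2)\le\Lambda_2(c_1,c_2)\le\cdots$.

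The hard part is the self-consistency: the Robin coefficients themselves depend on the unknown $\omega$, so I must solve $\omega^2=\Lambda_j\bigl(c_1(\omega),c_2(\omega)\bigr)$ for some fixed index $j$. I would settle this by an intermediate value argument. Using the min-max characterization of $\Lambda_j$ with the fixed test space spanned by the first $j$ free-plate eigenfunctions (those with $c_1=c_2=0$, i.e. $\mathcal{B}_1\phi=\mathcal{B}_2\phi=0$ on $\Gamma_1$), on which all norms are equivalent, one gets $\Lambda_j(c_1,c_2)\le\Lambda_j^{\mathrm{free}}+C_j(|c_1|+|c_2|)$ with $C_j$ independent of $\omega$; since $|c_i|\lesssim\omega$, the continuous function $H_j(\omega):=\Lambda_j(c_1(\omega),c_2(\omega))-\omega^2$ satisfies $H_j(0^+)=\Lambda_j^{\mathrm{free}}>0$ and $H_j(\omega)\to-\infty$ as $\omega\to\infty$, hence vanishes at some $\omega_\ast>0$. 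Under \eqref{p5-is1} one has $\beta_2^2-\beta_1^2=\gamma_2^2-\gamma_1^2=0$, so $c_1=c_2=0$ identically and $\omega_\ast=\sqrt{\Lambda_j^{\mathrm{free}}}$ is simply a free-plate frequency, which is the especially transparent case.

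Finally, with $\omega_\ast$ and a corresponding real eigenfunction $\phi$ of $T_{c_1(\omega_\ast),c_2(\omega_\ast)}$ in hand, I recover admissible delays from the angle conditions: $\tau_1=(\theta_1+2\pi k)/\omega_\ast$ and $\tau_2=(\theta_2+2\pi k)/\omega_\ast$, $k\in\N$, where $\theta_i\in[0,2\pi)$ is fixed by $\cos\theta_1=-\beta_1/\beta_2$, $\sin\theta_1=-s_1\sqrt{\beta_2^2-\beta_1^2}/\beta_2$ (and similarly $\theta_2$); these form sequences of positive delays. For each such choice the vector $U_0=\bigl(\phi,\;i\omega_\ast\phi,\;i\omega_\ast\,\partial_\nu\phi\,e^{-i\omega_\ast\tau_1\rho},\;i\omega_\ast\,\phi\,e^{-i\omega_\ast\tau_2\rho}\bigr)^\top$ lies in $D(\AA^0)$ (the compatibility and Robin relations hold by construction) and satisfies $\AA^0U_0=i\omega_\ast U_0$, so that $e^{t\AA^0}U_0=e^{i\omega_\ast t}U_0$ and the standard energy stays constant, as claimed. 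The only genuinely delicate point is the existence of the self-consistent frequency $\omega_\ast$; everything else is the verification that the separated ansatz indeed produces an element of $D(\AA^0)$.
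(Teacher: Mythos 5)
Your proposal is correct and follows the paper's overall strategy --- the separated ansatz $u=e^{i\omega t}\phi$, the phase conditions $\cos(\omega\tau_1)=-\beta_1/\beta_2$, $\cos(\omega\tau_2)=-\gamma_1/\gamma_2$ (solvable precisely under \eqref{p5-is1} or \eqref{p5-is2}) turning the boundary conditions into real Robin ones, and the recovery of delay sequences $\tau_i=(\theta_i+2k\pi)/\omega_\ast$ at the end are exactly the paper's steps leading to \eqref{p5-3.37}. Where you genuinely diverge is in resolving the self-consistency $\omega^2=\Lambda_j\bigl(c_1(\omega),c_2(\omega)\bigr)$: you run an intermediate value argument on $H_j(\omega)=\Lambda_j(c_1(\omega),c_2(\omega))-\omega^2$, using a min--max upper bound on the fixed span of free-plate eigenfunctions to force $H_j\to-\infty$, whereas the paper solves the scalar quadratic \eqref{p5-3.39} for $\lambda$ explicitly and \emph{defines} $\lambda$ as the minimum over the unit $L^2$-sphere $W$ of the resulting Rayleigh-type functional \eqref{p5-3.45}, then verifies by a first-variation computation ($f'(0)=0$) that the minimizer satisfies the Euler--Lagrange equation \eqref{p5-3.37}. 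The two routes buy slightly different things: the paper's minimization is self-contained and gives an explicit variational formula for the frequency, but (like yours) leaves a small point implicit, namely the attainment of the minimum in \eqref{p5-3.45}; your IVT route produces a frequency for every index $j$ (hence, in principle, many conservative modes), but you should state why $\omega\mapsto\Lambda_j(c_1(\omega),c_2(\omega))$ is continuous --- this follows from the relative form-boundedness of the boundary terms, $q_\nu(\phi)+q(\phi)\le\varepsilon\,a(\phi,\phi)+C_\varepsilon\|\phi\|_{L^2(\Omega)}^2$, combined with min--max --- and you should fix the signs $s_1,s_2$ so that the Robin coefficients are non-negative (as the paper does in \eqref{p5-3.36}), which makes the lower bound $H_j(0^+)=\Lambda_j^{\mathrm{free}}>0$ immediate. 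With those two sentences added, your argument is complete and fully equivalent in conclusion to the paper's.
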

\begin{proof}
	We seek for a solution of system \eqref{p5-2.1} in the form
	\begin{equation}\label{p5-3.32is}
		u(x,t)=e^{i\la t}\varphi(x), \ \ \hbox{ with }\la \neq 0.
	\end{equation}
	Inserting \eqref{p5-3.32is} in \eqref{p5-2.1}, we get
	\begin{equation}\label{p5-3.33is}
		\left\{\begin{array}{lll}
			-\la^2 \varphi +\Delta^2 \varphi =0  \ \ \text{in} \ \ \Omega ,\vspace{0.15cm}\\
			\varphi=\partial_\nu \varphi =0 \ \ \text{on} \ \  \Gamma_0,\vspace{0.15cm}\\
			\mathcal{B}_1 \varphi =-i\la (\beta_1 +\beta_2e^{-i\la \tau_1})\partial_\nu \varphi \ \ \text{on} \  \ \Gamma_1 ,\vspace{0.15cm}\\
			\mathcal{B}_2 \varphi =i\la (\gamma_1 +\gamma_2 e^{-i\la \tau_2})\varphi \ \ \text{on} \ \ \Gamma_1.
		\end{array}\right.
	\end{equation}
	Let $\theta \in H^2_{\Gamma_0}(\Omega)$. Multiplying the first equation in \eqref{p5-3.33is} by $\overline{\theta}$, then using Green's formula, we get
	\begin{equation}\label{p5-3.34is}
		-\la^2 \int_\Omega \varphi \overline{\theta}dx+a(\varphi,\theta)+i\la (\beta_1 +\beta_2 e^{-i\la\tau_1})\int_{\Gamma_1 }\partial_\nu \varphi \partial_{\nu}\overline{\theta}d\Gamma +i\la (\gamma_1 +\gamma_2 e^{-i\la\tau_2})\int_{\Gamma_1 } \varphi \overline{\theta}d\Gamma=0,
	\end{equation}
	for all $ \theta \in H^2_{\Gamma_0}(\Omega). $
	Now,  since $|\beta_2|\geq \beta_1$ and $|\gamma_2|\geq \gamma_1$, then we assume that
	\begin{equation}\label{p5-3.35}
		\cos(\la \tau_1 )=-\frac{\beta_1}{\beta_2 } \ \ \text{and }\ \ \cos(\la \tau_2)=-\frac{\gamma_1}{\gamma_2 }.
	\end{equation}
	Thus, we choose
	\begin{equation}\label{p5-3.36}
		\beta_2\sin(\la \tau_1 )= \sqrt{\beta_2^2-\beta_1^2} \ \ \text{and} \ \ \gamma_2\sin(\la\tau_2 )= \sqrt{\gamma_2^2-\gamma_1^2}.
	\end{equation}
	Inserting \eqref{p5-3.35} and \eqref{p5-3.36} in \eqref{p5-3.34is}, we obtain
	\begin{equation}\label{p5-3.37}
		-\la^2 \int_{\Omega} \varphi \overline{\theta} dx+a(\varphi,\theta)+ \la \sqrt{\beta_2^2-\beta_1^2}\int_{\Gamma_1}\partial_\nu \varphi \partial_\nu \overline{\theta}d\Gamma + \la \sqrt{\gamma_2^2-\gamma_1^2}\int_{\Gamma_1} \varphi  \overline{\theta}d\Gamma =0,
	\end{equation}
	for all $\theta \in H^2_{\Gamma_0}(\Omega)$. Now, taking $\theta=\varphi$ in \eqref{p5-3.37}, we obtain
	\begin{equation}\label{p5-3.38*}
		-\la^2 \int_{\Omega} |\varphi |^2 dx+a(\varphi,\varphi)+ \la \sqrt{\beta_2^2-\beta_1^2}\int_{\Gamma_1}|\partial_\nu \varphi|^2 d\Gamma + \la \sqrt{\gamma_2^2-\gamma_1^2}\int_{\Gamma_1} |\varphi|^2  d\Gamma =0.
	\end{equation}
	Without loss of generality, we can assume that
	\begin{equation}\label{p5-3.38}
		\|\varphi\|_{L^2(\Omega)}=1.
	\end{equation}
	Thus, from \eqref{p5-3.38*} and \eqref{p5-3.38}, we get
	\begin{equation}\label{p5-3.39}
		\la^2 -a(\varphi,\varphi) - \la \sqrt{\beta_2^2-\beta_1^2}q_{\nu}(\varphi) - \la \sqrt{\gamma_2^2-\gamma_1^2}q(\varphi)=0,
	\end{equation}
	where
	\begin{equation}
		q(\varphi)=\int_{\Gamma_1} |\varphi|^2 d\Gamma  \ \ \text{and} \ \ q_{\nu}(\varphi)=\int_{\Gamma_1} |\partial_\nu \varphi|^2 d\Gamma.
	\end{equation}
	We define
	$$
	W:=\left\{w\in H^2_{\Gamma_0}(\Omega) \ | \  \|w\|_{L^2(\Omega)}=1 \right\}.
	$$
	Now, we distinguish two cases.\\
	\textbf{Case 1:} If \eqref{p5-is1} holds, then from \eqref{p5-3.39}, we have
	\begin{equation}
		a(\varphi,\varphi)=\la^2.
	\end{equation}
	Let us define
	\begin{equation}
		\la^2:=\min_{w\in W} a(w,w).
	\end{equation}
	Now, if $\varphi$ verifies
	$$
	a(\varphi,\varphi)=\min_{w\in W} a(w,w),
	$$
	then it easy to see that $\varphi$ is a solution of \eqref{p5-3.34is} and consequently \eqref{p5-3.32is} is a solution of \eqref{p5-2.1}. Moreover, from \eqref{p5-3.32is} and \eqref{p5-E0}, we get
	$$
	E(t) =E(0)\geq a(\varphi,\varphi)+\la^2\int_{\Omega} |\varphi|^2 dx =2\la^2>0, \ \ \forall t\geq 0.
	$$
	Thus, the energy of \eqref{p5-2.1} is constant and positive. Further from our  assumptions
	$$
	\cos(\la \tau_1 )=-1, \ \ \sin(\la \tau_1 )=0, \ \ \cos(\la \tau_2)=-1, \ \ \sin(\la \tau_2)=0,
	$$
	system \eqref{p5-3.33is} becomes
	\begin{equation}\label{p5-3.33is*}
		\left\{\begin{array}{lll}
			-\la^2 \varphi +\Delta^2 \varphi =0  \ \ \text{in} \ \ \Omega ,\vspace{0.15cm}\\
			\varphi=\partial_\nu \varphi =0 \ \ \text{on} \ \  \Gamma_0,\vspace{0.15cm}\\
			\mathcal{B}_1 \varphi =0 \ \ \text{on} \  \ \Gamma_1 ,\vspace{0.15cm}\\
			\mathcal{B}_2 \varphi =0 \ \ \text{on} \ \ \Gamma_1.
		\end{array}\right.
	\end{equation}
	So, we can take a sequence $(\la_n)_n$ of positive real numbers defined by
	$$
	\la_n^2=\Lambda_n^2, \ \ n\in \N,
	$$
	where $\Lambda_n^2$, $n\in \N$, are the eigenvalues for the bi-Laplacian operator with the boundary conditions \eqref{p5-3.33is*}$_2$-\eqref{p5-3.33is*}$_{4}$. Then, setting
	$$
	\la_n \tau_1 =(2k+1)\pi, \ \ k\in \N \ \ \text{and} \ \ \la_n \tau_2 =(2l+1)\pi, \ \ l\in \N,
	$$
	we get the following sequences of delays
	$$
	\tau_{1,n,k}=\frac{(2k+1)\pi}{\la_n}, \ \ k, n \in \N \ \ \text{and} \ \ \tau_{2,n,l}=\frac{(2l+1)\pi}{\la_n}, \ \ l,n \in \N,
	$$
	which becomes arbitrarily small (or large) for suitable choices of the indices $n, k, l \in \N$. Therefore, we have found sets of time delays for which system \eqref{p5-2.1} is not asymptotically stable.\\\linebreak
	\textbf{Case 2:} If \eqref{p5-is2} holds, then from \eqref{p5-3.39}, we have
	\begin{equation}
		\lambda =\frac{1}{2}\left[ \sqrt{\beta_2^2-\beta_1^2}q_{\nu}(\varphi)+\sqrt{\gamma_2^2-\gamma_1^2}q(\varphi)  \pm \sqrt{\left(\sqrt{\beta_2^2-\beta_1^2}q_{\nu}(\varphi)+\sqrt{\gamma_2^2-\gamma_1^2}q(\varphi) \right)^2+4a(\varphi,\varphi)}\right].
	\end{equation}
	Let us define
	\begin{equation}\label{p5-3.45}
		\begin{array}{lll}
			\displaystyle \lambda :=\frac{1}{2} \min_{w\in W} \left\{\sqrt{\beta_2^2-\beta_1^2}q_{\nu}(w)+\sqrt{\gamma_2^2-\gamma_1^2}q(w) \right. \vspace{0.15cm}\\\left.
			\hspace{2cm}+ \displaystyle \sqrt{\left(\sqrt{\beta_2^2-\beta_1^2}q_{\nu}(w)+\sqrt{\gamma_2^2-\gamma_1^2}q(w) \right)^2+4a(w,w)}\right\}.
		\end{array}
	\end{equation}
	Let us prove that if the minimum in the right-hand side of \eqref{p5-3.45} is attained at $\varphi$, that is
	\begin{equation}\label{p5-3.46}
		\begin{array}{lll}
			\displaystyle \sqrt{\beta_2^2-\beta_1^2}q_{\nu}(\varphi)+\sqrt{\gamma_2^2-\gamma_1^2}q(\varphi)  + \sqrt{\left(\sqrt{\beta_2^2-\beta_1^2}q_{\nu}(\varphi)+\sqrt{\gamma_2^2-\gamma_1^2}q(\varphi) \right)^2+4a(\varphi,\varphi)}\vspace{0.5cm}\\
			\displaystyle  :=\min_{w\in W} \left\{\sqrt{\beta_2^2-\beta_1^2}q_{\nu}(w)+\sqrt{\gamma_2^2-\gamma_1^2}q(w) + \sqrt{\left(\sqrt{\beta_2^2-\beta_1^2}q_{\nu}(w)+\sqrt{\gamma_2^2-\gamma_1^2}q(w) \right)^2+4a(w,w)}\right\},
		\end{array}
	\end{equation}
	then $\varphi$ is a solution of \eqref{p5-3.37}. For this aim, take for $\varepsilon \in \R$
	\begin{equation}
		w =\varphi +\varepsilon \theta  \ \ \text{with} \ \ \theta \in H^2_{\Gamma_0} (\Omega) \ \ \text{such that } \ \ \int_{\Omega } \varphi \overline{\theta} dx =0.
	\end{equation}
	Thus, we have
	\begin{equation}
		\|w\|^2_{L^2(\Omega )}=\|\varphi\|^2_{L^2(\Omega)}+\varepsilon^2 \|\theta\|^2_{L^2(\Omega)}=1+\varepsilon^2 \|\theta\|^2_{L^2(\Omega )}.
	\end{equation}
	Now, if we define
	\begin{equation}
		\begin{array}{lll}
			\displaystyle 	f(\varepsilon):=\frac{1}{1+\varepsilon^2 \|\theta\|^2_{L^2(\Omega)}}\left(\sqrt{\beta_2^2-\beta_1^2}q_{\nu}(\varphi+\varepsilon\theta)+\sqrt{\gamma_2^2-\gamma_1^2}q(\varphi+\varepsilon\theta)\vspace{0.5cm}\right.\\
			\displaystyle  + \left. \sqrt{\left(\sqrt{\beta_2^2-\beta_1^2}q_{\nu}(\varphi+\varepsilon\theta)+\sqrt{\gamma_2^2-\gamma_1^2}q(\varphi+\varepsilon\theta) \right)^2+4a(\varphi+\varepsilon\theta,\varphi+\varepsilon\theta)}\right)
		\end{array},
	\end{equation}
	thus, from \eqref{p5-3.46}, we get
	$$
	f(\varepsilon)\geq f(0)=\sqrt{\beta_2^2-\beta_1^2}q_{\nu}(\varphi)+\sqrt{\gamma_2^2-\gamma_1^2}q(\varphi)  + \sqrt{\left(\sqrt{\beta_2^2-\beta_1^2}q_{\nu}(\varphi)+\sqrt{\gamma_2^2-\gamma_1^2}q(\varphi) \right)^2+4a(\varphi,\varphi)},
	$$
	which gives
	$$
	f^\prime (0)=0.
	$$
	Consequently, after an easy computation, we obtain
	\begin{equation}\label{p5-3.37*}
		a(\varphi,\theta)+ \la \sqrt{\beta_2^2-\beta_1^2}\int_{\Gamma_1}\partial_\nu \varphi \partial_\nu \overline{\theta}d\Gamma + \la \sqrt{\gamma_2^2-\gamma_1^2}\int_{\Gamma_1} \varphi  \overline{\theta}d\Gamma =0.
	\end{equation}
	Since any function $\tilde{\theta}\in H^2_{\Gamma_0}(\Omega)$ can be decomposed as
	$$
	\tilde{\theta}=\alpha \varphi+\theta \ \ \text{with} \ \ \alpha \in \R \ \ \text{and} \ \ \theta \in H^2_{\Gamma_0}(\Omega) \ \ \text{such that} \ \ \int_{\Omega} \varphi \overline{\theta}dx =0,
	$$
	from \eqref{p5-3.37*} and \eqref{p5-3.38*}, we obtain that $\varphi$ satisfies \eqref{p5-3.37}. Thus, for such $\la >0$
	$$
	\la \tau_1 =\arccos\left(-\frac{\beta_1}{\beta_2}\right)+2k\pi, \ \ k\in \N \ \ \text{and} \ \ \la \tau_2 =\arccos\left(-\frac{\gamma_1}{\gamma_2}\right)+2l\pi, \ \ l\in \N,
	$$ define a sequences of time delays for which \eqref{p5-2.1} is not asymptotically stable.
\end{proof}
\section{Stability results}
In this section, we will prove the wellposedness, strong stability and exponential stability of system \eqref{p5-seq10}-\eqref{p5-seq70}.
For this aim, we make the following assumptions
\begin{equation}\label{p5-H}\tag{\rm{H}}
\beta_1, \gamma_1 >0, \ \  \beta_2,\gamma_2 \in \R^*, \ \ |\beta_2|<\beta_1 \ \ \text{and} \ \ |\gamma_2|<\gamma_1.
\end{equation}
\subsection{Wellposedness of the system}
In this subsection, we will prove the wellposedness of  system \eqref{p5-seq10}-\eqref{p5-seq70}. Under the hypothesis \eqref{p5-H} and from \eqref{p5-3.9}, system \eqref{p5-seq10}-\eqref{p5-seq70} is dissipative in the sense that its energy is non-increasing with respect to time (i.e. $E^{\prime}(t)\leq 0$). Let us define the Hilbert  space $\HH$ by
$$
\HH=H^2_{\Gamma_0}(\Omega)\times L^2 (\Omega)\times \left( L^2 (\Gamma_1 \times (0,1)) \right)^2 ,
$$
where
$$
H^2_{\Gamma_0}(\Omega)=\left\{\f\in H^2(\Omega)\ | \ \f=\partial_\nu \mathsf{f}=0 \ \text{on} \ \Gamma_0 \right\}.
$$
The Hilbert space $\HH$ is equipped with the following inner product

\begin{equation}\label{p5-norm0}
	\begin{array}{lll}
		\displaystyle 	\left(U,U_1\right)_{\HH}=a(u,u_1)+\int_{\Omega} v\overline{v_1}dx
	 +\tau_1 |\beta_2|\int_{\Gamma_1}\int_{0}^1 z^1  \overline{z_1^1}  d\rho d\Gamma +\tau_2 |\gamma_2|\int_{\Gamma_1}\int_{0}^1 z^2 \overline{z_1^2}  d\rho d\Gamma,
	\end{array}
\end{equation}
where  $U=(u,v, z^1,z^2)^{\top} $, $U^1 =(u_1 ,v_1 , z^1_1,z^2_1 )^{\top}\in\HH$.\\

We define the linear unbounded  operator $\AA:D(\AA)\subset \HH\longmapsto \HH$  by:
\begin{equation*}
	D(\AA)=\left\{\begin{array}{ll}\vspace{0.15cm}
		U=(u,v,z^1,z^2)^{\top}  \in \mathsf{D}_{\Gamma_0}(\Delta^2 )\times H^2_{\Gamma_0}(\Omega) \times (L^2(\Gamma_1;H^1(0,1)))^2  \   | \   \vspace{0.15cm}\\
		\mathcal{B}_1 u=-\beta_1 \partial_\nu v-\beta_2 z^1(\cdot,1),  \ \ \mathcal{B}_2 u=\gamma_1 v+\gamma_2 z^2(\cdot,1), \ \ z^1(\cdot,0)=\partial_\nu v, \ \ z^2 (\cdot,0)=v \ \ \text{on} \ \ \Gamma_1
	\end{array}\right\}
\end{equation*}
where
$$
\mathsf{D}_{\Gamma_0}(\Delta^2)= \left\{ \mathsf{f}\in H^2_{\Gamma_0}(\Omega) \ | \ \Delta^2 \mathsf{f} \in L^2(\Omega), \ \mathcal{B}_1\mathsf{f} \in L^2(\Gamma_1), \ \text{and} \  \mathcal{B}_2 \mathsf{f} \in L^2(\Gamma_1) \right\}
$$
and
\begin{equation}\label{p5-opA0}
	\AA\begin{pmatrix}
		u\\v \\ z^1 \\ z^2
	\end{pmatrix}=
	\begin{pmatrix}
		\displaystyle 	v\vspace{0.15cm}\\  \displaystyle  -\Delta^2 u\vspace{0.15cm} \\ \displaystyle  -\frac{1}{\tau_1} z^1_\rho  \vspace{0.15cm}\\ \displaystyle -\frac{1}{\tau_2} z^2_\rho
	\end{pmatrix}, \forall \; U=(u,v,z^1,z^2 )^{\top}\in D(\AA).
\end{equation}
\begin{rk}
	{\rm
	 From  the fact that $2\Re \left(u_{x_1x_1}\overline{u}_{x_2x_2} \right)=|u_{x_1x_1}+u_{x_2x_2}|^2-|u_{x_1x_1}|^2-|u_{x_2x_2}|^2$, we remark that	
\begin{equation}\label{p5-2.20}
\begin{array}{lll}
\displaystyle |u_{x_1x_1}|^2+|u_{x_2x_2}|^2+2\mu \Re \left(u_{x_1x_1} \overline{u}_{x_2x_2}\right) +2(1-\mu)|u_{x_1x_2}|^2\vspace{0.25cm}\\
\quad =\displaystyle  (1-\mu)|u_{x_1x_1}|^2+(1-\mu)|u_{x_2x_2}|^2+\mu |u_{x_1x_1}+u_{x_2x_2}|^2 +2(1-\mu)|u_{x_1x_2}|^2 \geq 0,
\end{array}
\end{equation}
consequently, from \eqref{p5-1.3}, we get
 $$ a(u,u)\geq (1-\mu) |u|_{H^2(\Omega)}.$$
 Hence the sesquilinear form $a$ is coercive on $H^2_{\Gamma_0}(\Omega)$,
 since $\Gamma_0$ is non empty.
On the other hand,  from \eqref{p5-GFw} (see also Lemma 3.1 and Remark 3.1 in \cite{rao93}), we remark that
\begin{equation}\label{p5-2.21}
a(\mathsf{f},\mathsf{g})=\int_{\Omega} \Delta^2 \mathsf{f} \overline{\mathsf{g}} dx +\int_{\Gamma_1} (\mathcal{B}_1 \mathsf{f} \partial_{\nu} \overline{\mathsf{g}}-\mathcal{B}_2 \mathsf{f} \overline{\mathsf{g}})d\Gamma, \ \ \forall\; \mathsf{f}\in \mathsf{D}_{\Gamma_0}(\Delta^2), \  \mathsf{g}\in H^2_{\Gamma_0}(\Omega).
\end{equation}}
\hfill $\square$
\end{rk}
 Now, if $U=(u, u_t ,z^1 ,z^2)^{\top}$ is solution of \eqref{p5-seq10}-\eqref{p5-seq70}
 and is  sufficiently regular, then system \eqref{p5-seq10}-\eqref{p5-seq70} can be written as the following first order evolution equation
\begin{equation}\label{p5-firstevo0}
	U_t =\AA U , \quad U(0)=U_0,
\end{equation}
where  $U_0 =(u_0 ,u_1 ,f_0 (\cdot,-\rho \tau_1),g_0(\cdot,-\rho\tau_2) )^{\top}\in \HH$.
\begin{prop}\label{p5-amdissip0}{\rm
		Under the hypothesis \eqref{p5-H},	the unbounded linear operator $\AA$ is m-dissipative in the energy space $\HH$.} 
\end{prop}
\begin{proof}
For all $U=(u,v,z^1,z^2)^{\top}\in D(\AA)$, from \eqref{p5-norm0} and \eqref{p5-opA0}, we have
	$$
	\begin{array}{lll}
\displaystyle 	\Re \left(\AA U,U\right)_\HH =\Re \left\{a(v,u)-\int_\Omega \Delta^2 u \overline{v} dx -|\beta_2|\int_{\Gamma_1}\int_{0}^1 z^1_{\rho}  \overline{z^1}  d\rho d\Gamma - |\gamma_2|\int_{\Gamma_1}\int_{0}^1 z^2_{\rho} \overline{z^2}  d\rho d\Gamma \right\}.
	\end{array}
	$$
	Using \eqref{p5-2.21} and the fact that $U\in D(\AA)$, we obtain
\begin{eqnarray}\label{p5-dissipative}
	\begin{array}{lll}
\displaystyle 	\Re \left( \AA U,U\right)_\HH =-\beta_1 \int_{\Gamma_1}|\partial_\nu v|^2 d\Gamma-\Re \left\{\beta_2 \int_{\Gamma_1} z^1 (\cdot,1)\partial_\nu\overline{v}d\Gamma \right\}-\gamma_1 \int_{\Gamma_1}|v|^2 d\Gamma-\Re \left\{\gamma_2 \int_{\Gamma_1} z^2 (\cdot,1)\overline{v}d\Gamma \right\}\vspace{0.25cm}\\
\displaystyle  \hspace{2.5cm}-\frac{|\beta_2|}{2}\int_{\Gamma_1}|z^1 (\cdot,1)|^2 d\Gamma+\frac{|\beta_2|}{2}\int_{\Gamma_1}|\partial_\nu v|^2 d\Gamma -\frac{|\gamma_2|}{2}\int_{\Gamma_1}|z^2 (\cdot,1)|^2 d\Gamma+\frac{|\gamma_2|}{2}\int_{\Gamma_1}|v|^2 d\Gamma.
	\end{array}
	\end{eqnarray}
Now, by using Young's inequality, we get
\begin{equation}\label{p5-2.22}
\left\{	\begin{array}{lll}
\displaystyle 	-\Re \left\{\beta_2\int_{\Gamma_1}z^1 (\cdot,1)\partial_\nu\overline{v} d\Gamma \right\}\leq \frac{|\beta_2|}{2}\int_{\Gamma_1}|z^1 (\cdot,1)|^2 d\Gamma +\frac{|\beta_2|}{2}\int_{\Gamma_1} |\partial_\nu v|^2 d\Gamma,\vspace{0.25cm}\\
\displaystyle 	-\Re \left\{\gamma_2\int_{\Gamma_1}z^2 (\cdot,1)\overline{v} d\Gamma \right\}\leq \frac{|\gamma_2|}{2}\int_{\Gamma_1}|z^2 (\cdot,1)|^2 d\Gamma +\frac{|\gamma_2|}{2}\int_{\Gamma_1} |v|^2 d\Gamma.
	\end{array}\right.
\end{equation}
Inserting \eqref{p5-2.22} in \eqref{p5-dissipative} and using the hypothesis \eqref{p5-H}, we obtain
\begin{equation}\label{p5-dissip0}
\displaystyle \Re (\AA U,U)_{\HH}\leq -(\beta_1-|\beta_2|)\int_{\Gamma_1}|\partial_\nu v|^2 d\Gamma -(\gamma_1-|\gamma_2|)\int_{\Gamma_1}|v|^2 d\Gamma \leq 0, \ \ \forall\; U\in D(\AA)
\end{equation}
		which implies that $\AA$ is dissipative. Now, let us prove that  $\AA$ is maximal. For this aim, if $F=(f_1 ,f_2 ,f_3 ,f_4)^{\top}\in \HH$, we look for $U=(u,v,z^1,z^2 )^{\top}\in D(\AA)$ unique solution  of
		 \begin{equation}\label{p5--au=f}
		-\AA U=F.
	\end{equation}
Equivalently, we have the following system
	\begin{eqnarray}
		-v&=&f_1, \label{p5-f1}\\
		\Delta^2 u &=&f_2,\label{p5-f2}\\
		\frac{1}{\tau_1 }z^1_\rho&=& f_3 ,\label{p5-g1}\\
		\frac{1}{\tau_2}z^2_\rho&=& f_4 ,\label{p5-g2}
	\end{eqnarray}
with the following boundary conditions
\begin{equation}\label{p5-bc}
\begin{array}{lll}
	\displaystyle u=\partial_\nu u=0 \ \ \text{on} \ \ \Gamma_0 \ \  \text{and} \ \ \mathcal{B}_1 u=-\beta_1\partial_\nu v-\beta_2z^1(\cdot,1), \ \mathcal{B}_2 u=\gamma_1 v +\gamma_2 z^2(\cdot,1), \ z^1 (\cdot,0)=\partial_\nu v, \ z^2 (\cdot,0)=v  \ \ \text{on}\ \ \Gamma_1.
\end{array}
\end{equation}
From \eqref{p5-f1} and the fact that $F\in \HH$, we get
\begin{equation}\label{p5-v}
v=-f_1 \in H^2_{\Gamma_0}(\Omega).
\end{equation}
From \eqref{p5-g1}, \eqref{p5-g2}, \eqref{p5-bc} and the fact that $F \in \HH$, we obtain
\begin{equation}\label{p5-z1}
z^1_\rho \in L^2 (\Gamma_1\times (0,1)) \ \ \text{and} \ \ z^1 (\cdot,\rho)=\tau_1 \int_0^\rho f_3 (\cdot,s)ds+\partial_\nu v
\end{equation}
and
\begin{equation}\label{p5-z2}
z^2_\rho \in L^2(\Gamma_1\times (0,1)) \ \ \text{and} \ \ z^2 (\cdot,\rho)=\tau_2 \int_0^\rho f_4 (\cdot,s)ds+v.
\end{equation}
Consequently, from \eqref{p5-v},\eqref{p5-z1}, \eqref{p5-z2} and the fact that $f_3 , f_4\in L^2 (\Gamma_1\times (0,1))$, we deduce that
$$
z^1, z^2\in L^2(\Gamma_1;H^1(0,1)).
$$
It follows from \eqref{p5-f2}, \eqref{p5-bc}, \eqref{p5-z1} and \eqref{p5-z2} that
\begin{equation}\label{p5-2.36}
\left\{\begin{array}{lll}
	\displaystyle \Delta^2 u=f_2 \ \ \text{in} \ \ \Omega,\vspace{0.15cm}\\
	\displaystyle u=\partial_\nu u=0 \ \ \text{on} \ \ \Gamma_0,\vspace{0.15cm}\\
\displaystyle \mathcal{B}_1 u=(\beta_1+\beta_2)\partial_\nu f_1 -\tau_1 \beta_2 \int_0^1 f_3 (\cdot,s)ds \ \ \text{on} \ \ \Gamma_1,\vspace{0.15cm}\\
\displaystyle  \mathcal{B}_2 u=-(\gamma_1+\gamma_2) f_1 +\tau_2 \gamma_2 \int_0^1 f_4 (\cdot,s)ds \ \ \text{on} \ \ \Gamma_1.
\end{array}\right.
\end{equation}
Let $\varphi \in H^2_{\Gamma_0}(\Omega)$. Multiplying the first equation in \eqref{p5-2.36} by $\overline{\varphi}$ and integrating over $\Omega$, then using Green's formula, we obtain
\begin{equation}\label{p5-vf}
a(u,\varphi)=l(\varphi), \ \ \forall \varphi \in H^2_{\Gamma_0}(\Omega),
\end{equation}
where
$$
\begin{array}{lll}
\displaystyle l(\varphi)=\int_\Omega f_2 \overline{\varphi}dx +\int_{\Gamma_1}\left((\beta_1+\beta_2)\partial_\nu f_1 -\tau_1 \beta_2 \int_0^1 f_3 (\cdot,s)ds \right)\partial_\nu \overline{\varphi}d\Gamma\vspace{0.25cm}\\\displaystyle \hspace{2cm}+\int_{\Gamma_1 }\left( (\gamma_1+\gamma_2)f_1 -\tau_2 \gamma_2 \int_0^1 f_4 (\cdot,s)ds \right)\overline{\varphi}d\Gamma.
\end{array}
$$
	It is easy to see that,  $a$ is a sesquilinear, continuous and coercive form on $H^2_{\Gamma_0}(\Omega)\times H^2_{\Gamma_0}(\Omega)$ and $l$ is an antilinear and continuous form on $H^2_{\Gamma_0}(\Omega)$. Then, it follows by Lax-Milgram theorem that \eqref{p5-vf} admits a unique solution $u\in H^2_{\Gamma_0}(\Omega) $. By taking the test function
$\varphi\in\mathcal{D} (\Omega)$, we see that the first identity of \eqref{p5-2.36}  holds in the distributional sense, hence $\Delta^2 u \in L^2(\Omega)$. Coming back to \eqref{p5-vf}, and again applying Greens's formula \eqref{p5-GFw}, we find that
$$
\displaystyle \mathcal{B}_1 u=(\beta_1+\beta_2)\partial_\nu f_1 -\tau_1 \beta_2 \int_0^1 f_3 (\cdot,s)ds \ \ \text{on} \ \ \Gamma_1\\
$$
and
$$
\displaystyle  \mathcal{B}_2 u=-(\gamma_1+\gamma_2) f_1 +\tau_2 \gamma_2 \int_0^1 f_4 (\cdot,s)ds \ \ \text{on} \ \ \Gamma_1.
$$
Further since $F \in \HH$, we deduce that $u\in \mathsf{D}_{\Gamma_0}(\Delta^2)$.
 Consequently,  if we define $U=(u,v,z^1 ,z^2 )^{\top}$ with $u\in H^2_{\Gamma_0}(\Omega)$ the unique solution of \eqref{p5-vf}, $v=-f_1$, and  $z^1$ (resp. $z^2$) defined by \eqref{p5-z1} (resp.  \eqref{p5-z2}),  $U$ belongs to $D(\AA)$ is the unique solution of \eqref{p5--au=f}. Then, $\mathcal{A}$ is an isomorphism and since $\rho\left(\mathcal{A} \right)$ is open set of $\mathbb{C}$ (see Theorem 6.7 (Chapter III) in \cite{Kato01}),  we easily get $R(\lambda I -\mathcal{A}) = {\mathcal{H}}$ for a sufficiently small $\lambda>0 $. This, together with the dissipativeness of $\mathcal{A}$, imply that   $D\left(\mathcal{A}\right)$ is dense in ${\mathcal{H}}$   and that $\mathcal{A}$ is m-dissipative in ${\mathcal{H}}$ (see Theorems 4.5, 4.6 in  \cite{Pazy01}). The proof is thus complete.
	\end{proof}\\\linebreak

According to Lumer-Phillips theorem (see \cite{Pazy01}), Proposition \ref{p5-amdissip0} implies that the operator $\AA$ generates a $C_{0}$-semigroup of contractions $e^{t\AA}$ in $\HH$ which gives the well-posedness of \eqref{p5-firstevo0}. Then, we have the following result:
\begin{Thm}{\rm
		For all $U_0 \in \HH$,  system \eqref{p5-firstevo0} admits a unique weak solution $U(t)=e^{t\AA}U_0  \in C^0 (\R^+ ,\HH).
		$ Moreover, if $U_0 \in D(\AA)$, then the system \eqref{p5-firstevo0} admits a unique strong solution $U(t)=e^{t\AA}U_0 \in C^0 (\R^+ ,D(\AA))\cap C^1 (\R^+ ,\HH).$}
\end{Thm}
\subsection{Strong Stability}\label{p5-sec3}
\noindent In this subsection, we will prove the strong stability of  system \eqref{p5-seq10}-\eqref{p5-seq70}. The main result of this section is the following theorem.
\begin{Thm}\label{p5-strongthm20}
	{\rm Under the hypotheses \eqref{p5-H} and \eqref{p5-MGC},	the $C_0-$semigroup of contraction $\left(e^{t\AA}\right)_{t\geq 0}$ is strongly stable in $\HH$; i.e., for all $U_0\in \HH$, the solution of \eqref{p5-firstevo0} satisfies
		$$
		\lim_{t\rightarrow +\infty}\|e^{t\AA}U_0\|_{\HH}=0.
		$$}
\end{Thm}\noindent

 \noindent According to Arendt-Batty \cite{Arendt01}, to prove Theorem \ref{p5-strongthm20}, we need to prove that the operator $\AA$ has no pure imaginary eigenvalues and $\sigma(\AA)\cap i\R $ is countable. The proof of these results is not reduced to the analysis of the point spectrum of $\AA$ on the imaginary axis since its resolvent is not compact. Hence
the proof of Theorem \ref{p5-strongthm20} has been divided into the following two Lemmas.
\begin{lem}\label{p5-ker}
	{\rm For all $\la \in \R$, $i\la I-\AA$ is injective i.e.,
		$$\ker(i\la I-\AA)=\{0\}.$$

	}
\end{lem}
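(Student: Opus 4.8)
The plan is to take an arbitrary $U=(u,v,\eta,\xi,z^1,z^2)^{\top}\in D(\AA)$ satisfying $\AA U=i\la U$ and show that $U=0$. The entry point is the dissipation estimate \eqref{p5-dissip}. Since $\Re\left(\AA U,U\right)_{\HH}=\Re\left(i\la\|U\|_{\HH}^2\right)=0$ and, under \eqref{p5-H}, both $\beta_1-|\beta_2|$ and $\gamma_1-|\gamma_2|$ are strictly positive, the inequality \eqref{p5-dissip} forces
\[
\int_{\Gamma_1}|\eta|^2\,d\Gamma=\int_{\Gamma_1}|\xi|^2\,d\Gamma=0,
\]
so that $\eta=\xi=0$ on $\Gamma_1$.

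Next I would read off the eigenvalue equation $\AA U=i\la U$ component by component from \eqref{p5-opA}. The last two lines give the transport equations $z^1_\rho=-i\la\tau_1 z^1$ and $z^2_\rho=-i\la\tau_2 z^2$, which, together with the compatibility conditions $z^1(\cdot,0)=\eta=0$ and $z^2(\cdot,0)=\xi=0$ built into $D(\AA)$, integrate to $z^1\equiv z^2\equiv 0$; in particular $z^1(\cdot,1)=z^2(\cdot,1)=0$. Substituting $\eta=\xi=0$ and $z^1(\cdot,1)=z^2(\cdot,1)=0$ into the third and fourth components yields $\partial_\nu v=0$ and $v=0$ on $\Gamma_1$. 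Because $v=i\la u$, for $\la\neq 0$ this gives $u=\partial_\nu u=0$ on $\Gamma_1$, and together with the clamped condition on $\Gamma_0$ we obtain vanishing Cauchy data $u=\partial_\nu u=0$ on all of $\Gamma$. The case $\la=0$ is immediate, since $\ker\AA=\{0\}$ follows from the fact that $\AA$ is an isomorphism, as established in the proof of Proposition \ref{p5-amdissip}.

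Eliminating $v$ between the first two components gives the elliptic equation $\Delta^2 u=\la^2 u$ in $\Omega$, which is supplemented on $\Gamma_1$ by the four conditions $u=\partial_\nu u=0$ and, from $\mathcal B_1 u=-\eta=0$ and $\mathcal B_2 u=\xi=0$, the two natural conditions $\mathcal B_1 u=\mathcal B_2 u=0$. The key computation is to simplify the natural operators along $\Gamma_1$: since $u$ vanishes on $\Gamma_1$ all its tangential derivatives vanish there, and since $\partial_\nu u=0$ the full gradient $\nabla u$ vanishes on $\Gamma_1$; using the expressions \eqref{p5-JEL} for $\mathcal C_1 u$ and $\mathcal C_2 u$ one then checks that $\mathcal C_1 u=\mathcal C_2 u=0$ on $\Gamma_1$, whence $\mathcal B_1 u=\Delta u$ and $\mathcal B_2 u=\partial_\nu\Delta u$ there. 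Consequently $\Delta u=0$ and $\partial_\nu\Delta u=0$ on $\Gamma_1$, and combined with $u=\partial_\nu u=0$ this means that the complete Cauchy data of $u$ for the fourth-order operator vanish on the open piece $\Gamma_1$ of the boundary.

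Finally I would invoke unique continuation. Writing $\Delta^2-\la^2=(\Delta-\la)(\Delta+\la)$, the function $u$ solves a fourth-order elliptic equation with constant (hence analytic) coefficients and has vanishing Cauchy data on the non-characteristic hypersurface $\Gamma_1$ (recall $\Gamma$ is of class $C^4$); Holmgren's uniqueness theorem then yields $u\equiv 0$ in $\Omega$ (equivalently, one extends $u$ by zero across $\Gamma_1$ and applies interior unique continuation for the perturbed biharmonic operator). Then $v=i\la u=0$ and all remaining components vanish, so $U=0$. I expect the genuine obstacle to be precisely this last block: verifying carefully that the two natural boundary operators collapse to $\Delta u$ and $\partial_\nu\Delta u$ on $\Gamma_1$ under the vanishing Cauchy data, and then citing the correct Holmgren-type unique continuation result for $\Delta^2-\la^2$ on $\Omega$.
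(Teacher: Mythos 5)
Your proposal is correct and follows essentially the same route as the paper's proof: dissipativity forces $\eta=\xi=0$, the transport equations then kill $z^1,z^2$, the boundary relations give vanishing Cauchy data $u=\partial_\nu u=\Delta u=\partial_\nu\Delta u=0$ on $\Gamma_1$ via the collapse $\mathcal C_1u=\mathcal C_2u=0$ from \eqref{p5-JEL}, and Holmgren's theorem concludes. The only cosmetic difference is your added remark on factoring $\Delta^2-\la^2$, which the paper does not need since it cites Holmgren directly for the overdetermined problem \eqref{p5-1.28}.
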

\begin{proof}
	From Proposition \ref{p5-amdissip0}, we have $0\in \rho (\AA)$. We still need to show the result for $\la \in \R^{*}$. For this aim, suppose that $\la\neq0$ and let $U=(u,v,z^1, z^2)^{\top}\in D(\AA)$ be such that
	\begin{equation}\label{p5-AU=ilaU}
		\AA U=i\la U.
	\end{equation}Equivalently, we have the following system

\begin{eqnarray}
	v&=&i\la u,\label{p5-f1k}\\
	-\Delta^2 u &=& i\la v, \label{p5-f2k}\\
	-\frac{1}{\tau_1}z^1_\rho  &=& i\la z^1 ,\label{p5-f5k}\\
			-\frac{1}{\tau_2}z^2_\rho &=& i\la z^2 .\label{p5-f6k}
	\end{eqnarray}
From \eqref{p5-dissip0}, \eqref{p5-AU=ilaU} and \eqref{p5-H}, we get
\begin{equation*}
0=\Re \left(i\la \|U\|^2_\HH \right)=	\Re \left(\AA U,U\right)_\HH\leq  - (\beta_1 -|\beta_2|)\int_{\Gamma_1} |\partial_\nu v|^2 d\Gamma - (\gamma_1 -|\gamma_2|)\int_{\Gamma_1} |v|^2 d\Gamma\leq 0.
\end{equation*}
Thus, we have
\begin{equation}\label{p5-1.21}
\partial_\nu v=v=0 \ \ \text{on }  \  \ \Gamma_1,
\end{equation}
which gives, from \eqref{p5-f1k} and the fact that $\la \neq 0$, that
\begin{equation}\label{p5-3.12}
u=\partial_\nu u=0 \ \ \text{on} \ \ \Gamma_1.
\end{equation}
Using \eqref{p5-f5k}, \eqref{p5-f6k}, \eqref{p5-1.21} and the fact that  $z^1(\cdot,0)=\partial_\nu v$, $z^2(\cdot,0)=v$ on $\Gamma_1$, we obtain
\begin{equation}\label{p5-1.22}
z^1(\cdot,\rho)=\partial_\nu v e^{-i\la \tau_1 \rho}=0 \ \ \text{on} \ \ \Gamma_1\times (0,1),
\end{equation}
\begin{equation}\label{p5-1.22"}
	z^2(\cdot,\rho)=v e^{-i\la \tau_2 \rho}=0 \ \ \text{on} \ \ \Gamma_1\times (0,1).
\end{equation}

Now, from \eqref{p5-1.21}, \eqref{p5-1.22}, \eqref{p5-1.22"} and the fact that $U\in D(\AA)$, we get
\begin{equation}\label{p5-1.24}
	\mathcal{B}_1 u=\Delta u+(1-\mu)\mathcal{C}_1u=0 \ \ \text{on} \ \ \Gamma_1,
\end{equation}
\begin{equation}\label{p5-1.21"}
 \mathcal{B}_2 u=\partial_{\nu}\Delta u+(1-\mu)\partial_{\tau }\mathcal{C}_2u=0 \ \ \text{on} \ \ \Gamma_1.
\end{equation}
Using \eqref{p5-3.12} and the fact that $\nabla u= \partial_{\tau}u \tau +\partial_{\nu}u \nu \ \text{on} \ \Gamma_1 $, we obtain
\begin{equation}\label{p5-1.26}
	  u_{x_1}=u_{x_2}=0 \ \ \text{on} \ \ \Gamma_1.
\end{equation}
Now, from \eqref{p5-JEL}, \eqref{p5-3.12} and \eqref{p5-1.26}, we get
\begin{equation}
	\mathcal{C}_1u=\mathcal{C}_2u=0 \ \ \text{on} \ \ \Gamma_1,
\end{equation}
consequently, from \eqref{p5-1.24} and \eqref{p5-1.21"}, we get
\begin{equation}
	\Delta u =\partial_{\nu} \Delta u =0 \ \ \text{on} \ \ \Gamma_1.
		\end{equation}
Inserting \eqref{p5-f1k} in \eqref{p5-f2k}, we obtain
\begin{equation}\label{p5-1.28}
\left\{\begin{array}{lll}
	\la^2 u-\Delta^2 u=0 \ \ \text{in} \ \ \Omega,\\[0.1in]
	u=\partial_\nu u =0 \ \ \text{on} \ \ \Gamma_0,\\[0.1in]
	u=\partial_{\nu}u=\Delta u=\partial_{\nu }\Delta u=0 \ \ \text{on} \ \ \Gamma_1.
\end{array}\right.
\end{equation}
Holmgren uniqueness theorem (see \cite{LionsHolm}) yields
\begin{equation}\label{p5-1.29}
	u=0 \ \ \text{in} \ \ \Omega.
\end{equation}
Finally, from \eqref{p5-f1k}, \eqref{p5-1.22}, \eqref{p5-1.22"}, and \eqref{p5-1.29}, we get
$$
U=0.
$$
	\end{proof}
\begin{lem}\label{p5-surj}
	{\rm Under the hypothesis $\eqref{p5-H}$, for all $\la \in \R $, we have
		$$R(i\la I-\AA )=\HH.$$
		
	}
\end{lem}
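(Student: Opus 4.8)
The plan is to fix $F=(f_1,f_2,f_3,f_4,g_1,g_2)^\top\in\HH$ and produce a unique $U=(u,v,\eta,\xi,z^1,z^2)^\top\in D(\AA)$ solving $(i\la I-\AA)U=F$. The case $\la=0$ is immediate, since $0\in\rho(\AA)$ by Proposition \ref{p5-amdissip}, so I would assume $\la\neq 0$ throughout. Reading the resolvent equation componentwise against \eqref{p5-opA} gives $v=i\la u-f_1$, the biharmonic relation $\Delta^2 u-\la^2 u=f_2+i\la f_1$ in $\Omega$, the two feedback identities coming from the $\eta$- and $\xi$-equations, and the two transport equations $\frac{1}{\tau_j}z^j_\rho+i\la z^j=g_j$ with $z^1(\cdot,0)=\eta$ and $z^2(\cdot,0)=\xi$.

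The first reduction is to solve the transport equations explicitly: they are first order linear ODEs in $\rho$, yielding $z^1(\cdot,\rho)=\eta e^{-i\la\tau_1\rho}+\tau_1 e^{-i\la\tau_1\rho}\int_0^\rho e^{i\la\tau_1 s}g_1(\cdot,s)\,ds$ and similarly for $z^2$; in particular $z^1(\cdot,1)$ and $z^2(\cdot,1)$ are expressed through $\eta,\xi$ and the data with no elliptic solve. Inserting these into the $\eta$- and $\xi$-equations turns them into the algebraic relations $(i\la+\beta_1+\beta_2 e^{-i\la\tau_1})\eta=\partial_\nu v+f_3-(\text{data})$ and $(i\la+\gamma_1+\gamma_2 e^{-i\la\tau_2})\xi=v+f_4-(\text{data})$. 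Here the hypothesis \eqref{p5-H} is essential: since $|\beta_2|<\beta_1$ and $|\gamma_2|<\gamma_1$, the real parts $\beta_1+\beta_2\cos(\la\tau_1)\ge\beta_1-|\beta_2|>0$ and $\gamma_1+\gamma_2\cos(\la\tau_2)\ge\gamma_1-|\gamma_2|>0$ never vanish, so both prefactors are invertible. Using $v=i\la u-f_1$ and $\partial_\nu v=i\la\partial_\nu u-\partial_\nu f_1$, this writes $\eta$ as a multiple of $\partial_\nu u|_{\Gamma_1}$ plus data and $\xi$ as a multiple of $u|_{\Gamma_1}$ plus data, and through $\mathcal B_1 u=-\eta$, $\mathcal B_2 u=\xi$ these become Robin-type boundary conditions for $u$ on $\Gamma_1$.

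Everything is now reduced to a single boundary value problem for $u$: $\Delta^2 u-\la^2 u=f_2+i\la f_1$ with clamped conditions on $\Gamma_0$ and the derived feedback conditions on $\Gamma_1$. I would put it in weak form on $H^2_{\Gamma_0}(\Omega)$ via the Green formula \eqref{p5-2.21}, obtaining a variational equation $B(u,\varphi)=\ell(\varphi)$ with $B(u,\varphi)=a(u,\varphi)-\la^2\int_\Omega u\overline\varphi\,dx+b(u,\varphi)$, where $b$ collects the boundary trace terms produced by the feedback. The difficulty, unlike in Proposition \ref{p5-amdissip}, is that $B$ is no longer coercive because of $-\la^2\int_\Omega u\overline\varphi\,dx$. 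The remedy is a Fredholm-alternative argument: $a$ is coercive on $H^2_{\Gamma_0}(\Omega)$ by \eqref{p5-2.20}, while both the volume term $\int_\Omega u\overline\varphi\,dx$ (via the compact embedding $H^2_{\Gamma_0}(\Omega)\hookrightarrow L^2(\Omega)$) and the boundary form $b$ (via compactness of the trace $H^2(\Omega)\to L^2(\Gamma_1)$) define, through Lax--Milgram and Riesz representation, a compact perturbation of the isomorphism associated with $a$. Thus $B$ corresponds to an operator of the form (isomorphism)$+$(compact), hence Fredholm of index zero; since Lemma \ref{p5-ker} guarantees that the only solution of the homogeneous problem is $u=0$, injectivity forces surjectivity and yields a unique $u$.

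Finally I would recover the full solution: testing with $\varphi\in\mathcal D(\Omega)$ shows $\Delta^2 u=f_2+i\la f_1\in L^2(\Omega)$ in the distributional sense, then re-applying \eqref{p5-GFw} reads off $\mathcal B_1 u,\mathcal B_2 u\in L^2(\Gamma_1)$ (the right-hand data being $L^2(\Gamma_1)$ traces), so $u\in\mathsf D_{\Gamma_0}(\Delta^2)$; defining $v,\eta,\xi,z^1,z^2$ by the formulas above places $U$ in $D(\AA)$ and solves $(i\la I-\AA)U=F$. I expect the main obstacle to be precisely the loss of coercivity: one must run the Fredholm alternative on the $u$-component alone, which is legitimate because the non-compactness of the resolvent of $\AA$ is carried entirely by the transport variables $z^1,z^2$ that have been eliminated explicitly, leaving a genuinely compact perturbation in the elliptic problem for $u$.
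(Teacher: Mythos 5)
Your proposal is correct and follows essentially the same route as the paper: explicit integration of the transport equations, elimination of $v$, $\eta$, $\xi$, $z^1$, $z^2$ to obtain Robin-type boundary conditions for $u$, a variational formulation on $H^2_{\Gamma_0}(\Omega)$ split into the coercive form $a$ plus a compact perturbation (volume and trace terms), the Fredholm alternative with injectivity supplied by Lemma \ref{p5-ker} after lifting a kernel element of the variational problem to an element of $\ker(i\la I-\AA)$, and finally the regularity recovery via \eqref{p5-GFw}. Your observation that hypothesis \eqref{p5-H} forces $\Re\bigl(i\la+\beta_1+\beta_2e^{-i\la\tau_1}\bigr)\ge\beta_1-|\beta_2|>0$, so the prefactors $C_{i\la}$ and $K_{i\la}$ are well defined, is a detail the paper leaves implicit.
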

\begin{proof}
	From Proposition \ref{p5-amdissip0}, we have $0\in\rho(\AA)$. We still need to show the result for $\la \in \R^{\star}$. For this aim, for $F=(f_1,f_2,f_3,f_4)^{\top}\in \HH$, we look for $U=(u,v,z^1,z^2)^{\top}\in D(\AA)$ solution of \begin{equation}\label{p5-2.78}
		(	i\la I -\AA)U=F.
	\end{equation}
Equivalently, we have the following system
\begin{eqnarray}
	i\la u-v&=&f_1,\label{p5-f1s}\\
	i\la v+\Delta^2 u&=&f_2,\label{p5-f2s}\\
	i\la z^1 +\frac{1}{\tau_1}z^1_\rho &=&f_3 ,\label{p5-f5s}\\
		i\la z^2 +\frac{1}{\tau_2}z^2_\rho &=&f_4 \label{p5-f6s},
\end{eqnarray}
with the following boundary conditions
\begin{equation}\label{p5-bcs}
	\begin{array}{lll}
		\displaystyle u=\partial_\nu u=0 \ \ \text{on} \ \ \Gamma_0 \ \  \text{and} \ \ \mathcal{B}_1 u=-\beta_1\partial_\nu v-\beta_2 z^1(\cdot, 1), \ \mathcal{B}_2 u=\gamma_1 v +\gamma_2 z^2(\cdot,1), \ z^1 (\cdot,0)=\partial_\nu v , \ z^2 (\cdot,0)=v  \ \ \text{on}\ \ \Gamma_1.
	\end{array}
\end{equation}
From \eqref{p5-f5s}, \eqref{p5-f6s} and \eqref{p5-bcs}, we deduce that
\begin{equation}\label{p5-z1s}
z^1(\cdot,\rho)=\partial_\nu v e^{-i\la \tau_1\rho}+\tau_1 \int_0^\rho f_3(x,s)e^{i\la \tau_1(s-\rho)}ds \ \ \text{on} \ \ \Gamma_1 \times (0,1),
\end{equation}
\begin{equation}\label{p5-z2s}
	z^2(\cdot,\rho)=v e^{-i\la \tau_2\rho}+\tau_2 \int_0^\rho f_4(x,s)e^{i\la \tau_2(s-\rho)}ds \ \ \text{on} \ \ \Gamma_1 \times (0,1).
\end{equation}

It follows from \eqref{p5-f1s}, \eqref{p5-f2s}, \eqref{p5-bcs}, \eqref{p5-z1s} and \eqref{p5-z2s} that
\begin{equation}\label{p5-3.32}
\left\{ \begin{array}{lll}
\displaystyle -\la^2 u+\Delta^2 u =i\la f_1 +f_2 \ \ \text{in } \ \ \Omega,\\[0.1in]
u=\partial_\nu u=0 \ \ \text{on } \ \ \Gamma_0,\\[0.1in]
\mathcal{B}_1 u=-C_{i\la}( \partial_{\nu} u+\frac{i}{\la}\partial_\nu f_1)-F_{i\la}  \ \ \text{on} \ \ \Gamma_1,\\[0.1in]
\mathcal{B}_2 u=D_{i\la}( u+\frac{i}{\la}f_1)+ G_{i\la}  \ \ \text{on} \ \ \Gamma_1,
\end{array}\right.
\end{equation}
where
$$C_{i\la}= i\la(\beta_1+\beta_2e^{-i\la \tau_1}),\;\;F_{i\la}=\beta_2\tau_1 \displaystyle\int_0^1 f_3(x,s)e^{i\la \tau_1(s-1)}ds,$$
and
$$D_{i\la}=i\la(\gamma_1+\gamma_2e^{-i\la\tau_2}),\;\;G_{i\la}=\gamma_2\tau_2 \displaystyle\int_0^1 f_4(x,s)e^{i\la \tau_2(s-1)}ds .$$
Let $\varphi \in H^2_{\Gamma_0}(\Omega)$. Multiplying the first equation in \eqref{p5-3.32} by $\overline{\varphi}$, integrating over $\Omega$, then using Green's formula, we obtain
\begin{equation}\label{p5-3.33}
	b(u,\varphi)=l(\varphi), \ \ \forall \varphi \in \mathbb{V}:= H^2_{\Gamma_0}(\Omega),
\end{equation}
where
$$
b(u,\varphi)=b_1 (u,\varphi)+b_2 (u,\varphi),
$$
with
\begin{equation}\label{p5-3.34}
\left\{\begin{array}{lll}
\displaystyle b_1 (u,\varphi)=a(u,\varphi),\vspace{0.25cm}\\
\displaystyle b_2(u,\varphi)=-\la^2 \int_{\Omega}u\overline{\varphi}dx +C_{i\la} \int_{\Gamma_1}\partial_{\nu} u \partial_{\nu} \overline{\varphi}d\Gamma +D_{i\la }\int_{\Gamma_1}u\overline{\varphi}d\Gamma
\end{array}\right.
\end{equation}
and
\begin{equation}
	l(\varphi)=\int_{\Omega} (i\la f_1+f_2)\overline{\varphi}dx-\int_{\Gamma_1}(\frac{i}{\la} C_{i\la}\partial_\nu f_1+F_{i\la})\partial_{\nu}\overline{\varphi}d\Gamma- \int_{\Gamma_1} (\frac{i}{\la} D_{i\la}+G_{i\la})\overline{\varphi}d\Gamma.
\end{equation}
Let $\mathbb{V}^{\prime}$ be the dual space of $\mathbb{V}$. Let us define the following operators
\begin{equation}
	\begin{array}{lll}
\mathbb{B}: \mathbb{V}& \longmapsto& \mathbb{V}^{\prime}\\
\ \ \quad u &\longmapsto& \mathbb{B}u
\end{array}
 \ \ \text{and} \ \
	\begin{array}{lll}
	\mathbb{B}_i: \mathbb{V}& \longmapsto& \mathbb{V}^{\prime}\\
	\ \ \quad u &\longmapsto& \mathbb{B}_i u
\end{array}, \ \ i\in \{1,2\},
\end{equation}
such that
\begin{equation}\label{p5-3.54}
	\left\{	\begin{array}{lll}
		\displaystyle	(\mathbb{B}u)(\varphi)=b(u,\varphi),\ \ \forall \varphi \in \mathbb{V},\vspace{0.15cm}\\\displaystyle 		\displaystyle	(\mathbb{B}_i u)(\varphi)=b_i(u,\varphi),\ \ \forall \varphi \in \mathbb{V},\ i\in \{1,2\}.
	\end{array}\right.
\end{equation}
We need to prove that the operator $\mathbb{B}$ is an isomorphism. For this aim, we divide the proof into two steps:\\\linebreak
\textbf{Step 1.} In this step, we  prove that the operator $\mathbb{B}_2 $ is compact. For this aim, let us define the following Hilbert space
\begin{equation*}
H^s_{\Gamma_0}(\Omega):=\left\{\varphi \in H^s (\Omega) \ | \ \varphi=\partial_{\nu}\varphi =0 \ \ \text{on} \ \ \Gamma_0 \right\} \ \ \text{with} \ \ s\in \left(\frac{3}{2},2\right).
\end{equation*}
 Now, from \eqref{p5-3.34} and a trace theorem, we get
\begin{equation*}
	\begin{array}{lll}
\displaystyle |b_2(u,\varphi) |\lesssim \|u\|_{L^2(\Omega)}\|\varphi\|_{H^2(\Omega)}+\|\partial_\nu u\|_{L^2(\Gamma_1)}\|\partial_\nu \varphi\|_{L^2(\Gamma_1)}+\| u\|_{L^2(\Gamma_1)}\| \varphi\|_{L^2(\Gamma_1)} \vspace{0.25cm}\\
\displaystyle \hspace{1.5cm}\lesssim \|u\|_{H^s(\Omega)}\|\varphi\|_{H^2(\Omega)},
\end{array}
\end{equation*}
for all $ s\in \left( \frac{3}{2},2\right)$. As $\mathbb{V}$  is compactly embedded into $H^s_{\Gamma_0}(\Omega)$ for any $ s\in \left( \frac{3}{2},2\right)$, $\mathbb{B}_2 $ is indeed a compact operator.
\\\linebreak
This compactness property and the fact that $\mathbb{B}_1$ is an isomorphism imply that the operator $\mathbb{B}=\mathbb{B}_1 +\mathbb{B}_2 $ is a Fredholm operator of index zero. Now, following Fredholm alternative, we simply need to prove that the operator $\mathbb{B}$ is injective to obtain that it is an isomorphism.\\\linebreak
\textbf{Step 2.} In this step, we  prove that the operator $\mathbb{B}$ is injective (i.e. $\ker(\mathbb{B})=\{0\}$). For this aim, let $\mathsf{u}\in \ker(\mathbb{B})$ which gives
\begin{equation*}
	b(\mathsf{u},\varphi)=0,\ \ \forall \varphi\in \mathbb{V}.
\end{equation*}Equivalently, we have
\begin{equation*}
a(\mathsf{u},\varphi)-\la^2 \int_{\Omega}\mathsf{u}\overline{\varphi}dx + C_{i\la} \int_{\Gamma_1}\partial_{\nu} \mathsf{u}\partial_{\nu} \overline{\varphi}d\Gamma +D_{i\la }\int_{\Gamma_1}\mathsf{u}\overline{\varphi}d\Gamma=0, \ \forall \varphi \in \mathbb{V}.
\end{equation*}
Thus, we find that

\begin{equation*}
	\left\{\begin{array}{lll}
		\displaystyle	-\la^2 \mathsf{u} +\Delta^2 \mathsf{u}=0 \ \ \text{in} \ \ \mathcal{D}^\prime (\Omega), \vspace{0.15cm}\\
		\mathsf{u}=\partial_{\nu}\mathsf{u}=0  \ \ \text{on} \ \  \Gamma_0 \vspace{0.15cm}\\
		\mathcal{B}_1 \mathsf{u}= -C_{i\la} \partial_{\nu} \mathsf{u} \ \ \text{on} \ \ \Gamma_1, \vspace{0.15cm}\\ \mathcal{B}_2 \mathsf{u}=D_{i\la} \mathsf{u} \ \ \text{on} \ \ \Gamma_1.\vspace{0.15cm}
	\end{array}\right.
\end{equation*}Therefore, the vector $\mathsf{U}$ defined by
\[
\mathsf{U}=(\mathsf{u},i\la \mathsf{u},i\la e^{-i\la \tau_1 \rho}  \partial_{\nu} \mathsf{u}, i\la e^{-i\la \tau_2 \rho } \mathsf{u})^{\top}\] belongs to $D(\AA )$  and satisfies $$i\la \mathsf{U}-\AA\mathsf{U}=0,
$$
and consequently $\mathsf{U}\in \ker(i\la I-\AA)$. Hence Lemma \ref{p5-ker} yields $\mathsf{U}=0$ and consequently $\mathsf{u}=0$ and $\ker(\mathbb{B})=\{0\}$.
\\\linebreak
Steps 1 and 2 guarantee that the operator $\mathbb{B}$ is isomorphism. Furthermore it is easy to see that the  operator $l$ is an antilinear and continuous form on $\mathbb{V}$. Consequently,  \eqref{p5-3.33} admits a unique solution $u\in \mathbb{V} $. In  \eqref{p5-3.33}, by taking  test functions
$\varphi\in\mathcal{D} (\Omega)$, we see that the first identity of
 \eqref{p5-3.32}  holds in the distributional sense, hence $\Delta^2 u \in L^2(\Omega)$. Coming back to \eqref{p5-3.33}, and again applying Green's formula \eqref{p5-GFw}, we find that
 $$
 \mathcal{B}_1 u=-C_{i\la}( \partial_{\nu} u+\frac{i}{\la}\partial_\nu f_1)-F_{i\la}  \ \ \text{on} \ \ \Gamma_1,
$$
 and
 $$
 \mathcal{B}_2 u=D_{i\la}( u+\frac{i}{\la}f_1)+ G_{i\la}  \ \ \text{on} \ \ \Gamma_1.
 $$
 Further since $u$, $\partial_\nu u$, $f_1$, $\partial_\nu f_1$, $\mathsf{F}_{i\la}$ and $\mathsf{G}_{i\la}$ belong to $L^2(\Gamma_1)$, we deduce that $u\in \mathsf{D}_{\Gamma_0}(\Delta^2)$.
 Consequently, if   $u\in \mathbb{V} $ is the unique solution of \eqref{p5-3.33} and if  we define $z^1$ (resp. $z^2$) by \eqref{p5-z1s}
 (resp. \eqref{p5-z2s}),
 we deduce that
 \[
U=(u,i\la u-f_1,z^1, z^2)^\top\]
 belongs to $D(\AA) $ and  is the unique solution of \eqref{p5-2.78}.
\end{proof}
\\\linebreak
\textbf{Proof of Theorem \ref{p5-strongthm20}.} From Lemma \ref{p5-ker},  the operator $\AA $ has no pure imaginary eigenvalues (i.e. $\sigma_p (\AA)\cap i\R=\emptyset$). 
Moreover, from Lemma \ref{p5-ker} and Lemma \ref{p5-surj},   $i\la I-\AA $ is bijective for all $\lambda\in \mathbb{R}$ and since $\AA$ is closed, we conclude, with the help of the closed graph theorem, that  $i\la I-\AA $ is an isomorphism for all $\lambda\in \mathbb{R}$, hence that
$\sigma(\AA )\cap i\R=\emptyset$.
\xqed{$\square$}
\subsection{Exponential stability}\label{p5-sec3.2}
In this subsection, we will prove the strong stability of  system \eqref{p5-seq10}-\eqref{p5-seq70}.
Let us start up this subsection with the definition of our multiplier geometric control condition.
\begin{defi}\label{p5-defA1}
	{\rm
		We say that the partition $(\Gamma_0, \Gamma_1)$ of the boundary $\Gamma$ satisfies the multiplier geometric control condition \textbf{MGC} if there exists a point $x_0 \in \R^2$ and a positive constant $\delta$ such that
		\begin{equation}\label{p5-MGC}\tag{\rm{GC}}
			h\cdot \nu \geq \delta^{-1} \ \ \text{on} \ \ \Gamma_1 \quad \text{and} \quad h\cdot \nu \leq 0 \ \ \text{on} \ \ \Gamma_0,
		\end{equation}
		where $h(x)=x-x_0$.
		\hfill{$\square$}
		
	}
\end{defi}
\begin{theoreme}\label{p5-exp}{\rm
		Under the hypotheses \eqref{p5-H} and \eqref{p5-MGC}, the $C_0-$semigroup $e^{t\AA}$ is exponentially stable; i.e. there exists constants $M\geq 1$ and $\epsilon>0$ independent of $U_{0}\in \HH$ such that
		$$
		\|e^{t\AA}U_{0}\|_{\HH}\leq Me^{-\epsilon t}\|U_{0}\|_{\HH}, \forall t\geq 0.
		$$}
\end{theoreme}
\begin{proof}  Since $i\R\subset \rho(\AA)$ (see the previous subsection), according to \cite{Huang01} and \cite{pruss01}, to prove  Theorem \ref{p5-exp}, it remains to prove that
\begin{equation}\label{p5-0}
		\limsup_{\la\in \R,\ \abs{\la}\rightarrow \infty}\|\left(i\la I-\AA\right)^{-1}\|_{\mathcal{L}(\HH)}<\infty.
\end{equation}
We will prove condition \eqref{p5-0} by a contradiction argument. For this purpose,
suppose that \eqref{p5-0} is false, then there exists $\left\{(\la_n,U_n:=(u_n,v_n,z^1_n, z^2_n)^{\top})\right\}_{n\geq1}\subset \R^{\ast} \times D(\mathcal{A})$ with
\begin{equation}\label{p5-contra-pol20}
	|\la_n|\to\infty \ \hbox{ as } n\to\infty\quad \text{and}\quad \|U_n\|_{\mathcal{H}}=1,
	\forall n\geq 1,
\end{equation}
such that
\begin{equation}\label{p5-eq0ps0}
 (i\la_n I-\AA )U_n =F_n:=(f_{1,n},f_{2,n},f_{3,n},f_{4,n})^{\top}  \to 0  \quad \text{in}\quad \HH, \ \hbox{ as } n\to\infty.
\end{equation}
For simplicity, we drop the index $n$. Equivalently, from \eqref{p5-eq0ps0}, we have
\begin{eqnarray}
	i\la u-v&=& f_1 \to 0 \ \ \text{in} \ \ H^2_{\Gamma_0}(\Omega),\label{p5-f1p0}\\
	i\la v +\Delta^2 u &=& f_2 \to 0 \ \ \text{in} \ \ L^2(\Omega),\label{p5-f2p0}\\
	i\la z^1+\frac{1}{\tau_1}z^1_\rho &=&  f_3 \to 0 \ \ \text{in} \ \ L^2(\Gamma_1 \times (0,1)),\label{p5-g1p0}\\
	i\la z^2+\frac{1}{\tau_2}z^2_\rho &=&  f_4 \to 0 \ \ \text{in} \ \ L^2(\Gamma_1 \times (0,1)).\label{p5-g2p0}
\end{eqnarray}
Taking the inner product of \eqref{p5-eq0ps0} with $U$ in $\HH$ and using \eqref{p5-dissip0}, we get
 $$
 (\beta_1 -|\beta_2|)\int_{\Gamma_1} |\partial_{\nu}v|^2 d\Gamma +(\gamma_1-|\gamma_2|)\int_{\Gamma_1} |v|^2 d\Gamma \leq -\Re (\AA U,U)_{\HH}=\Re (F,U)_{\HH} \leq  \|F\|_{\HH} \|U\|_{\HH},
 $$
 From the above estimation, \eqref{p5-H} and the fact that $\|F\|_{\HH}=o(1)$ and $\|U\|_{\HH}=1$, we obtain
 \begin{equation}\label{p5-3.26}
 	\int_{\Gamma_1}|\partial_\nu v|^2 d\Gamma =o(1) \ \ \text{and }  \ \ \int_{\Gamma_1}|v|^2d\Gamma =o(1),
 \end{equation}

\begin{lem}\label{p5-lem2}
{\rm
	Under the hypothesis \eqref{p5-H}, the solution $U=(u,v,z^1,z^2)^\top \in D(\AA)$ of \eqref{p5-f1p0}-\eqref{p5-g2p0} satisfies the following estimations
	\begin{equation}\label{p5-3.24}
	\int_{\Gamma_1}\int_0^1 |z^1|^2 d\rho d\Gamma=o(1) \ \ \text{and} \ \ \int_{\Gamma_1} |z^1 (\cdot,1)|^2 d\Gamma =o(1),
	\end{equation}	
\begin{equation}\label{p5-3.29}
	\int_{\Gamma_1}\int_0^1 |z^2|^2 d\rho d\Gamma=o(1) \ \ \text{and} \ \ \int_{\Gamma_1} |z^2 (\cdot,1)|^2 d\Gamma =o(1).
\end{equation}
}
\end{lem}
\begin{proof}
From \eqref{p5-z1s}, Cauchy-Schwarz inequality and the fact that $\rho \in (0,1)$, we get
\begin{equation*}
\begin{array}{lll}
\displaystyle \int_{\Gamma_1}\int_0^1 |z^1|^2 d\rho d\Gamma  \leq 2\int_{\Gamma_1} |\partial_\nu v|^2 d\Gamma +2 \tau_1^2\int_{\Gamma_1}\int_0^1 \left( \int_{0}^\rho |f_3 (\cdot,s)| ds\right)^2 d\rho d\Gamma\vspace{0.25cm}\\
\hspace{3.5cm}\displaystyle \leq 2\int_{\Gamma_1} |\partial_\nu v|^2 d\Gamma +2\tau_1^2\int_{\Gamma_1}\int_0^1 \rho \int_0^\rho |f_3(\cdot,s)|^2 ds d\rho d\Gamma\vspace{0.25cm}\\
\hspace{3.5cm}\displaystyle \leq 2\int_{\Gamma_1} |\partial_\nu v|^2 d\Gamma +2 \tau_1^2\left(\int_0^1 \rho d\rho\right)\int_{\Gamma_1} \int_0^1 |f_3(\cdot,s)|^2 ds  d\Gamma \vspace{0.25cm}\\
\hspace{3.5cm}\displaystyle  = 2\int_{\Gamma_1 }|\partial_\nu v|^2 d\Gamma +\tau_1^2 \int_{\Gamma_1}\int_{0}^1 |f_3 (\cdot,s)|^2 ds d\Gamma.
\end{array}
\end{equation*}
The above inequality, \eqref{p5-3.26} and the fact that $f_3 \to 0 $ in $L^2(\Gamma_1 \times (0,1))$ lead to  the first estimation in \eqref{p5-3.24}. Now, from \eqref{p5-z1s}, we deduce that
$$
z^1(\cdot,1)=\partial_\nu v e^{-i\la \tau_1}+ \tau_1\int_0^1 f_3 (\cdot,s)e^{i\la \tau_1(s-1)}ds \ \ \text{on} \ \ \Gamma_1,
$$
consequently, by using Cauchy-Schwarz inequality, we get
$$
\begin{array}{lll}
\displaystyle \int_{\Gamma_1} |z^1(\cdot,1)|^2 d\Gamma \leq 2 \int_{\Gamma_1 }|\partial_\nu v|^2 d\Gamma +2\tau_1^2\int_{\Gamma_1} \left(\int_0^1 |f_3(\cdot,s)|ds\right)^2d\Gamma\vspace{0.25cm}\\
\hspace{2.5cm}\displaystyle \leq 2\int_{\Gamma_1} |\partial_\nu v|^2 d\Gamma +2\tau_1^2\int_{\Gamma_1} \int_0^1 |f_3(\cdot,s)|^2 dsd\Gamma.
\end{array}
$$
Therefore, from the above inequality, \eqref{p5-3.26} and the fact that $f_3\to 0 $ in $L^2(\Gamma_1\times (0,1))$, we get the second estimation in \eqref{p5-3.24}.
The same argument as before yielding \eqref{p5-3.29}, the proof is  complete.
\end{proof}

Next, from the above estimations, \eqref{p5-3.26} and the fact that $U\in D(\AA)$, we get
\begin{equation}\label{p5-3.25}
\displaystyle \int_{\Gamma_1}|\mathcal{B}_1 u|^2 d\Gamma =o(1)\ \ \text{and} \ \  \int_{\Gamma_1}|\mathcal{B}_2 u|^2 d\Gamma=o(1).
\end{equation}
\begin{lem}\label{p5-lem2111}
{\rm
	Under the hypothesis \eqref{p5-H}, the solution $U=(u,v,z^1,z^2)^\top \in D(\AA)$ of \eqref{p5-f1p0}-\eqref{p5-g2p0} satisfies the following estimations
\begin{equation}\label{p5-3.23}
 		\int_{\Gamma_1} |\partial_\nu u|^2 d\Gamma =o(\la^{-2}) \ \  \text{and} \ \  \int_{\Gamma_1}| u|^2 d\Gamma =o(\la^{-2}).
\end{equation}	
}
\end{lem}
\begin{proof}
Since $U\in D(\AA)$, we have $\mathcal{B}_1 u=-\beta_1\partial_\nu v -\beta_2 z^1(\cdot,1)$ and $\mathcal{B}_2 u= \gamma_1 v +\gamma_2 z^2(\cdot,1)$ \; on $\Gamma_1$.\\

Inserting \eqref{p5-f1p0} in the above equations, we get
$$
i\la \partial_\nu u=-\frac{1}{\beta_1}\mathcal{B}_1 u+\partial_\nu f_1-\frac{\beta_2}{\beta_1}z^1(\cdot, 1) \ \  \text{on} \ \ \Gamma_1 ,
$$
and
$$
i\la u=\frac{1}{\gamma_1}\mathcal{B}_2 u+f_1-\frac{\gamma_2}{\gamma_1}z^2(\cdot, 1) \ \  \text{on} \ \ \Gamma_1.
$$
From the above equations, we deduce that
\begin{equation}\label{est1}
\int_{\Gamma_1} |\la\partial_\nu u|^2 d\Gamma \lesssim \frac{1}{\beta_1^2} \int_{\Gamma_1}|\mathcal{B}_1 u|^2 d\Gamma+\int_{\Gamma_1} |\partial_\nu f_1|^2 d\Gamma+\frac{\beta_2^2}{\beta_1^2}\int_{\Gamma_1}|z^1(\cdot, 1)|^2 d\Gamma,
\end{equation}
and
\begin{equation}\label{est2}
\int_{\Gamma_1} |\la u|^2 d\Gamma \lesssim \frac{1}{\gamma_1^2} \int_{\Gamma_1}|\mathcal{B}_2 u|^2 d\Gamma+\int_{\Gamma_1} | f_1|^2 d\Gamma+\frac{\gamma_2^2}{\gamma_1^2}\int_{\Gamma_1}|z^2(\cdot, 1)|^2 d\Gamma.
\end{equation}
Using a trace theorem and the fact that $a(f_1,f_1)=o(1)$, we get
	$$
	\int_{\Gamma_1} |\partial_{\nu}f_1|^2d\Gamma \lesssim \|f_1\|^2_{H^2(\Omega)}\lesssim a(f_1,f_1)=o(1),
	$$
and
$$
\int_{\Gamma_1} |f_1|^2d\Gamma \lesssim \|f_1\|^2_{H^2(\Omega)}\lesssim a(f_1,f_1)=o(1).
$$
Inserting these estimations in \eqref{est1} and \eqref{est2}, then using Lemma \ref{p5-lem2} and \eqref{p5-3.25}, we get the desired result.
\end{proof}
\begin{lem}\label{p5-lem8}
	{\rm
	Under the hypotheses \eqref{p5-H} and \eqref{p5-MGC}, the solution $U=(u,v,z^1,z^2)^\top \in D(\AA)$ of \eqref{p5-f1p0}-\eqref{p5-g2p0} satisfies the following estimations
	
	\begin{equation}
	\int_{\Omega}|\la u|^2 dx =o(1) \ \ \text{and} \ \ a(u,u)=o(1).
	\end{equation}

}
\end{lem}
\begin{proof}
Inserting \eqref{p5-f1p0} in \eqref{p5-f2p0}, we get
$$
	-\la^2 u+\Delta^2 u=i\la f_1+f_2 \ \ \text{in} \ \ \Omega.
	$$
	Multiplying the above equation by $(h\cdot \nabla \overline{u})$, integrating over $\Omega$, then taking the real part, we obtain
\begin{equation}\label{p5-5.1900}
	\Re \left\{-\la^2 \int_{\Omega} u (h\cdot \nabla \overline{u})dx
	+\int_{\Omega}\Delta^2 u (h\cdot \nabla \overline{u}) dx \right\}
	=\Re \left\{i\la\int_{\Omega}f_1 (h\cdot \nabla \overline{u})dx +\int_{\Omega} f_2 (h\cdot \nabla \overline{u})dx \right\}
\end{equation}
Now, by using Green's formula and the fact that $u=0$ on $\Gamma_0$, then using \eqref{p5-3.23}, we get
\begin{equation}\label{p5-5.2000}
	\Re \left\{-\la^2 \int_{\Omega} u (h\cdot \nabla \overline{u})dx\right\}=\frac{1}{2}\int_{\Omega}|\la u|^2 dx -\frac{1}{2}\int_{\Gamma_1} (h\cdot \nu)|\la u|^2 d\Gamma=\frac{1}{2}\int_\Omega |\la u|^2 dx +o(1).
\end{equation}
Using the fact that $\la^2a(u,u)=O(1)$ and $a(f_1,f_1)=o(1)$, we obtain
$$
\left\{\begin{array}{lll}
|\la|\|\nabla u\|_{L^2(\Omega)}\leq|\la| \|u\|_{H^2(\Omega)}\lesssim |\la|\sqrt{a(u,u)}=O(1),\vspace{0.25cm}\\
\|f_1\|_{L^2(\Omega)} \leq \|f_1\|_{H^2(\Omega)}\lesssim \sqrt {a(f_1,f_1)}=o(1).
\end{array}\right.
$$
Thus, from the above estimations and the fact that $f_2 \to 0$ in $L^2(\Omega)$, we obtain
\begin{equation}\label{p5-5.2100}
\Re \left\{i\la\int_{\Omega}f_1 (h\cdot \nabla \overline{u})dx +\int_{\Omega} f_2 (h\cdot \nabla \overline{u})dx \right\}=o(1).
\end{equation}
	Inserting \eqref{p5-5.2000} in \eqref{p5-5.1900} and using \eqref{p5-5.2100}, we obtain
	\begin{equation}\label{p5-2.12600}
	\frac{1}{2}\int_{\Omega}|\la u|^2 dx= -\Re \left\{\int_{\Omega }\Delta^2 u (h\cdot \nabla \overline{u})dx \right\}+o(1).
	\end{equation}
According to Lemma 5.4 in \cite{Badawidyn}, for all $u\in \mathsf{D}_{\Gamma_0} (\Delta^2)$, we have
\begin{equation}\label{p5-5.18}
	\begin{array}{lll}
		\displaystyle -\Re \left\{\int_\Omega \Delta^2 u (h\cdot \nabla \overline{u})dx \right\} \leq -\frac{1}{2}a(u,u) +\frac{\varepsilon_1 R^2}{2}\int_{\Gamma_1} |\mathcal{B}_2 u|^2 d\Gamma  \vspace{0.25cm}\\
		\qquad \displaystyle +\left( \int_{\Gamma_1 }|\mathcal{B}_1 u|^2 d\Gamma \right)^{\frac{1}{2}} \left(\int_{\Gamma_1}|\partial_{\nu}u|^2 d\Gamma \right)^{\frac{1}{2}}+\frac{R^2 \varepsilon_2}{2}\int_{\Gamma_1}|\mathcal{B}_1u|^2 d\Gamma,
	\end{array}
\end{equation}
where $R=\|h\|_{L^\infty(\Omega)}$ and $\varepsilon_1$, $\varepsilon_2$ are positive constants. Consequently, using \eqref{p5-3.23} and \eqref{p5-3.25}, we obtain
\begin{equation}\label{p5-3.300}
	-\Re \left\{\int_{\Omega }\Delta^2 u (h\cdot \nabla \overline{u})dx \right\}\leq -\frac{1}{2}a(u,u)+o(1).
\end{equation}
Finally, inserting \eqref{p5-3.300} in \eqref{p5-2.12600}, we get
$$
\frac{1}{2}\int_{\Omega }|\la u|^2 dx +\frac{1}{2} a(u,u)=o(1).
$$
The proof is thus complete.
\end{proof}
\linebreak
\textbf{Proof of Theorem \ref{p5-exp}:} From Lemmas \ref{p5-lem2} and \ref{p5-lem8}, we deduce that
$$
\|U\|_\HH=o(1),
$$
which contradicts \eqref{p5-contra-pol20}.
\end{proof}

		


{\bf Acknowledgment}: Mohamed Balegh extends his appreciation to the Deanship of Scientific Research at King Khalid University, Saudi Arabia for funding this work through Small Groups Project under grant number R.G.P.1/169/43.


\end{document}